\newtheorem{theorem}{Theorem}[section]
\newtheorem{lemma}{Lemma}[section]
\newtheorem{condition}{Condition}[section]
\numberwithin{equation}{section}
\def\im{\mathrm{i}}
\def\d{\mathrm{d}}
\def\e{\mathrm{e}}
\def\O{\mathcal{O}}
\def\eps{\varepsilon}
\def\arcsec{\operatorname{arcsec}}
\def\Ai{\mathscr{A}\!\!\text{\calligra i}\,}
\definecolor{customgreen}{rgb}{0.0, 0.5, 0.0}
\author[G. Nemes]{Gerg\H{o} Nemes}
\email{nemes.gergo@renyi.hu}
\address{Alfr\'ed R\'enyi Institute of Mathematics, Re\'altanoda utca 13--15, Budapest H-1053, Hungary}
\keywords{asymptotic expansions, Borel summability, error bounds, factorial series, WKB analysis}
\subjclass[2010]{34E05, 34E20, 34M25}
\begin{document}

\title[Borel summability of WKB solutions]{On the Borel summability of WKB solutions\\ of certain Schr\"odinger-type differential equations}

\begin{abstract} A class of Schr\"odinger-type second-order linear differential equations with a large parameter $u$ is considered. Analytic solutions of this type of equations can be described via (divergent) formal series in descending powers of $u$. These formal series solutions are called the WKB solutions. We show that under mild conditions on the potential function of the equation, the WKB solutions are Borel summable with respect to the parameter $u$ in large, unbounded domains of the independent variable. It is established that the formal series expansions are the asymptotic expansions, uniform with respect to the independent variable, of the Borel re-summed solutions and we supply computable bounds on their error terms. In addition, it is proved that the WKB solutions can be expressed using factorial series in the parameter, and that these expansions converge in half-planes, uniformly with respect to the independent variable. We illustrate our theory by application to a radial Schr\"odinger equation associated with the problem of a rotating harmonic oscillator and to the Bessel equation.
\end{abstract} 
\maketitle

\section{Introduction and main results}

In this paper, we study Schr\"odinger-type differential equations of the form
\begin{equation}\label{Eq0}
\frac{\d^2 w(u,z)}{\d z^2} = u^2 f(u,z)w(u,z),
\end{equation}
where $u$ is a real or complex parameter, $z$ lies in some domain $\mathbf{D}$, bounded or otherwise, of a Riemann surface, and the potential function $f(u,z)$ is an analytic function of $z$ having the form
\[
f(u,z) = f_0 (z) + \frac{f_1 (z)}{u} + \frac{f_2 (z)}{u^2}.
\]
It is assumed that the coefficient functions $f_n(z)$ are analytic functions of $z$ in the domain $\mathbf{D}$ and are independent of $u$. Furthermore, we suppose that $f_0(z)$ does not vanish in $\mathbf{D}$. We are interested in asymptotic expansions of analytic solutions of \eqref{Eq0} when the parameter $u$ becomes large. It is convenient to work in terms of the transformed variables $\xi$ and $W(u,\xi)$ given by\footnote{This transformation is often called the Liouville transformation.}
\begin{equation}\label{Liouville}
\xi =\xi(z)  = \int_{z_0}^z f_0^{1/2} (t)\d t ,\quad w(u,z) = f_0^{ - 1/4} (z)W(u,\xi).
\end{equation}
The path of integration, except perhaps its endpoint $z_0$, must lie entirely within $\mathbf{D}$. In the case that $z_0$ is a boundary point of $\mathbf{D}$, it is assumed that the integral converges as $t\to z_0$ along the contour of integration. The transformation \eqref{Liouville} maps $\mathbf{D}$ on a domain $\mathbf{G}$, say. In terms of the new variables, equation \eqref{Eq0} becomes
\begin{equation}\label{Eq}
\frac{ \d ^2 W(u,\xi)}{ \d \xi ^2 } = \left(u^2 +u\phi(\xi)+\psi (\xi )\right)W(u,\xi),
\end{equation}
where
\[
\phi (\xi ) = \frac{f_1 (z)}{f_0 (z)}\quad \text{and} \quad \psi (\xi ) = \frac{f_2 (z)}{f_0 (z)} - \frac{1}{f_0^{3/4} (z)}\frac{\d^2}{\d z^2}\frac{1}{f_0^{1/4} (z)}
\]
are holomorphic functions of $\xi$ in $\mathbf{G}$. It can be verified by direct substitution that the equation \eqref{Eq} has formal solutions of the form
\begin{equation}\label{WKB1}
W^\pm  (u,\xi) = \exp\! \left(  \pm u\xi  \pm \frac{1}{2}\int_{}^\xi \phi (t)\d t \right)\left( 1 + \sum\limits_{n = 1}^\infty \frac{\mathsf{A}_n^ \pm  (\xi)}{u^n} \right),
\end{equation}
where the coefficients $\mathsf{A}_n^ \pm  (\xi )$, which are analytic functions of $\xi$ in $\mathbf{G}$, can be determined recursively by
\begin{equation}\label{wkbcoeffrec}
\mathsf{A}_{n + 1}^ \pm  (\xi ) = - \frac{1}{2}\phi (\xi )\mathsf{A}_n^ \pm  (\xi )  \mp \frac{1}{2}\frac{\d \mathsf{A}_n^ \pm  (\xi )}{\d\xi }  \mp \frac{1}{2}\int_{}^\xi  \left( \frac{1}{4}\phi ^2 (t) \mp \frac{1}{2}\phi '(t) - \psi (t) \right)\mathsf{A}_n^ \pm  (t)\d t, 
\end{equation}
with the convention $\mathsf{A}_0^\pm (\xi ) = 1$. The constants of integration in \eqref{WKB1} and \eqref{wkbcoeffrec} are arbitrary. An alternative method for the evaluation of the coefficients $\mathsf{A}_n^\pm (\xi )$, which avoids nested integrations, can be found in Appendix \ref{Appendix}.

The formal solutions \eqref{WKB1} are usually referred to as WKB solutions, in honour of physicists Wentzel \cite{Wentzel1926}, Kramers \cite{Kramers1926} and Brillouin \cite{Brillouin1926}, who independently discovered these solutions in a quantum mechanical context in 1926. To be strictly accurate, the WKB solutions were discussed earlier by Carlini \cite{Carlini1817} in 1817, by Liouville \cite{Liouville1837} and Green \cite{Green1837} in 1837, by Strutt (Lord Rayleigh) \cite{Rayleigh1912} in 1912, and again by Jeffreys \cite{Jeffreys1924} in 1924. The reader is referred to the book of Dingle \cite[Ch. XIII]{Dingle1973} for a historical discussion and critical survey.

In general, the series in \eqref{WKB1} diverge, and the most that can be established is that in certain subregions of $\mathbf{G}$ they provide uniform asymptotic expansions of solutions of the differential equation \eqref{Eq}. Olver \cite[Ch. 10, \S9]{Olver1997} demonstrated the asymptotic nature of the expansions \eqref{WKB1} via the construction of explicit error bounds. In this paper we take a different approach and study the WKB solutions \eqref{WKB1} from the point of view of Borel summability. Our main aim is to show that the expansions \eqref{WKB1} are Borel summable in well-defined subdomains $\Gamma^\pm$ of $\mathbf{G}$, provided that $\phi(\xi)$ and $\psi(\xi)$ satisfy certain mild requirements. We shall construct exact, analytic solutions $W^\pm (u,\xi)$ of the differential equation \eqref{Eq} in terms of Laplace transforms (with respect to the parameter $u$) of some associated functions, called the Borel transforms of the WKB solutions. The formal expansions in \eqref{WKB1} will then arise as asymptotic series of these exact solutions. By carefully analysing the Borel transforms, we will derive new, computable bounds for the remainder terms of the asymptotic expansions. In addition, it will be shown that the WKB solutions can be expressed using factorial series in the parameter $u$, and that these expansions converge for $\Re u>0$, uniformly in the independent variable $\xi$.

The present work is a first step towards the global analysis of the WKB solutions \eqref{WKB1}. Our main result demonstrates that the WKB solutions are Borel summable provided that one stays away from Stokes curves emerging either from transition points (zeros or singularities) of $f_0(z)$ or from singular points of $f_1(z)$ and $f_2(z)$ (see Remark (i) following Theorem \ref{thm1}). In particular, the problem of global connection formulae for WKB solutions across Stokes curves is not studied in this paper. A possible approach to tackle such problems is discussed briefly in Section \ref{Section7}.

Over the past several decades, there has been increasing interest in the study of Borel summability of asymptotic series, and this field is closely linked with exponential asymptotics (see, for example, \cite{OldeDaalhuis2003}). Indeed, there have been extensive developments in summability of singular variable asymptotic expansions, with fewer general results currently existing for the more complicated singular parameter case. The relevance of Borel summability to the WKB analysis was first clearly observed by Bender and Wu \cite{Bender1969}. In the seminal work \cite{Voros1983}, Voros showed how to analyse Borel re-summed WKB solutions and demonstrated the relationship between the singular points of the Borel transforms and the global connection formulae. Dunster et al. \cite{Dunster1993} studied differential equations of the type \eqref{Eq} in the case that $\phi(\xi) \equiv 0$ (or, equivalently, $f_1(z) \equiv 0$). Under suitable conditions on $\psi(\xi)$, they established the Borel summability of WKB solutions away from Stokes curves and derived their convergent factorial series expansions. Their analysis was based on a method of N\o rlund. Kamimoto and Koike \cite{Kamimoto2011} considered the equation \eqref{Eq0} in the special case that $f(u,z)=f(z)$ is a rational function and is independent of $u$, and proved connection formulae for WKB solutions by demonstrating the Borel summability of certain WKB theoretic transformation series introduced originally by Aoki et al. \cite{Aoki1991}. Their results hold in appropriate neighbourhoods of Stokes curves issuing from either a simple zero or a simple pole of $f(z)$ (see also \cite{Sasaki2013}). For further contributions, see, for example, \cite{Aoki2009,Aoki2019,Bodine2002,Delabaere1999,Dillinger1993,Dunster2001,Dunster2004,Iwaki2014,Iwaki2016,Takahashi2019,Takei2017} and the references therein.

Before stating our results, we introduce the necessary assumptions, notation and definitions. Hereinafter, we will suppose that the functions $\phi(\xi)$ and $\psi(\xi)$ fulfil at least one of the following two conditions.

\begin{condition}\label{cond1} The functions $\phi(\xi)$ and $\psi(\xi)$ are analytic in a domain $\Delta \subset \mathbf{G}$, which has the property that there exist positive constants $c$ and $\rho$ independent of $u$ and $\xi$, such that
\[
\left| \phi (\xi) \right| \le \frac{c}{1 + \left| \xi  \right|^{1+\rho} },\quad \left| \psi (\xi) \right| \le \frac{c}{1 + \left| \xi  \right|^{1+\rho} }
\]
for all $\xi \in \Delta$.
\end{condition}

\begin{condition}\label{condv2} The functions $\phi (\xi)$ and $\psi (\xi)$ are analytic in a domain $\Delta \subset \mathbf{G}$, which has the property that there exist positive constants $c$ and $\rho$ independent of $u$ and $\xi$, such that
\[
\left| \phi (\xi ) \right| \le \frac{c}{1 + \left| \xi  \right|^{1/2+\rho} },\quad \left| \phi' (\xi ) \right| \le \frac{c}{1 + \left| \xi  \right|^{1+\rho} },\quad \left| \psi (\xi ) \right| \le \frac{c}{1 + \left| \xi  \right|^{1+\rho} }
\]
for all $\xi \in \Delta$.
\end{condition}

Often these conditions can be brought about by a suitable normalisation of the differential equation \eqref{Eq}. (See the second example in Section \ref{Section6}, below.)

Let $d>0$. We denote by $\Gamma^\pm (d)$ any (non-empty) subdomains of $\Delta$ that satisfy the following two requirements:
\begin{enumerate}[(i)]
	\item The distance between each point of $\Gamma^\pm (d)$ and each boundary point of $\Delta$ has lower bound $d$ (which is to be chosen independently of $u$).\footnote{It will be assumed throughout the paper that $d$ is always chosen so that none of the domains $\Gamma^\pm (d)$ is empty.}
	\item If $\xi \in \Gamma^\pm (d)$, then $\xi \mp x \in \Gamma^\pm (d)$ for all $x>0$.
\end{enumerate}
Note that condition (ii) requires $\Delta$ to contain at least one infinite strip that is parallel to the real axis. In particular, $\Delta$ (and hence $\mathbf{G}$) has to be unbounded. We shall often integrate along half-lines that are parallel to the real axis. For any given $w\in \Gamma^+(d)$, $\mathscr{P}(-\infty,w)$ denotes the half-line that runs from $ - \infty  + \im\Im w$ to $w$. Observe that by condition (ii), $\mathscr{P}(-\infty,w)$ lies entirely in $\Gamma^+(d)$. Similarly, for any $w\in \Gamma^- (d)$, $\mathscr{P}(w, + \infty )$ will denote the half-line emanating from $w$ and passing to $+\infty + \im\Im w$. If the orientations of these paths are reversed, we will adopt the notation $\mathscr{P}(w, -\infty)$ and $\mathscr{P}( + \infty ,w)$, respectively.

We seek solutions of the differential equation \eqref{Eq} of the form
\begin{equation}\label{WKBsol}
W^\pm  (u,\xi ) = \exp\! \left( \pm u\xi  \pm \frac{1}{2}\int_{\alpha^\pm}^\xi \phi (t)\d t \right)\left( 1 + \eta^{\pm}(u,\xi) \right)
\end{equation}
where $\Re u>0$, $\xi \in \Gamma^\pm (d)$, $\alpha^\pm \in \mathbf{G}$ are fixed, and $\eta ^ \pm  (u,\xi )$ satisfy the limit conditions
\begin{equation}\label{limitreq}
\mathop {\lim }\limits_{\Re \xi  \to  \mp \infty } \eta ^ \pm  (u,\xi )=0.
\end{equation}
In \eqref{WKBsol}, the contours of integration have to lie entirely within $\mathbf{G}$ and can be infinite provided the integrals converge. It is readily seen that if such solutions exist, they must be unique. We would like our solutions to posses asymptotic expansions of the form
\begin{equation}\label{WKBexp}
\eta^{\pm}(u,\xi) \sim \sum\limits_{n = 1}^\infty \frac{\mathsf{A}_n^ \pm  (\xi )}{u^n},
\end{equation}
as $|u|\to +\infty$, uniformly with respect to $\xi \in \Gamma^\pm (d)$. The coefficients $\mathsf{A}_n^ \pm  (\xi )$ must satisfy recurrence relations of the type \eqref{wkbcoeffrec} with a suitable choice of integration constants. Note that \eqref{limitreq} implies $\mathop {\lim }\nolimits_{\Re \xi  \to  \mp \infty } \mathsf{A}_n^ \pm  (\xi )=0$.
 We will show in Section \ref{Section2} that if $\mathsf{A}_0^ \pm  (\xi ) = 1$ and
\begin{equation}\label{Arec}
\mathsf{A}_{n + 1}^ \pm  (\xi ) = - \frac{1}{2}\phi (\xi )\mathsf{A}_n^ \pm  (\xi )  \mp \frac{1}{2}\frac{\d \mathsf{A}_n^ \pm  (\xi )}{\d\xi }  \mp \frac{1}{2}\int_{\mathscr{P}( \mp \infty ,\xi )} \left( \frac{1}{4}\phi ^2 (t) \mp \frac{1}{2}\phi '(t) - \psi (t) \right)\mathsf{A}_n^ \pm  (t)\d t   
\end{equation}
for $n\geq 0$ and $\xi \in \Gamma^\pm (d)$, then the requirements $\mathop {\lim }\nolimits_{\Re \xi  \to  \mp \infty } \mathsf{A}_n^ \pm  (\xi )=0$ are fulfilled. The $\mathsf{A}_n^ \pm  (\xi )$'s defined in this way will be the coefficients in the asymptotic expansions \eqref{WKBexp} of our solutions \eqref{WKBsol}, and throughout the rest of the paper, unless stated otherwise, $\mathsf{A}_n^ \pm  (\xi )$ will always refer to the coefficients specified by the above recurrence relation.

Finally, we define, for any $r>0$,
\[
B(r) = \left\{ t:\left| t \right| < r \right\} \quad \text{and} \quad U(r) = \left\{ t:\Re t > 0,\, \left| \Im t \right| < r \right\} \cup B(r).
\]
Thus, $U(r)$ consists of all points whose distance from the positive real axis is strictly less than $r$. We are now in a position to formulate our main result.

\begin{theorem}\label{thm1} Let $d>0$. If Condition \ref{cond1} or \ref{condv2} hold then the differential equation \eqref{Eq} has solutions of the form \eqref{WKBsol}, where $\eta^\pm (u,\xi )$ are analytic functions in $\left\{ u:\Re u > 0 \right\} \times \Gamma ^ \pm  (d)$, satisfy \eqref{limitreq} and can be represented in the form
\begin{equation}\label{Laplacetrans}
\eta^\pm (u,\xi )  = \int_0^{ + \infty } \e^{ - ut} F^\pm (t,\xi )\d t.
\end{equation}
The Borel transforms $F^\pm (t,\xi)$ are analytic functions in $U(2d) \times \Gamma^\pm (d)$ with convergent power series expansions
\[
F^\pm (t,\xi) = \sum\limits_{n = 0}^\infty \frac{\mathsf{A}_{n+1}^\pm(\xi)}{n!}t^n,
\]
valid when $(t,\xi) \in B(2d) \times \Gamma^\pm (d)$. In addition, for each $\sigma>0$, $0<r<d$, there exist positive numbers $K^\pm = K^\pm(\sigma,r)$ independent of $t$ and $\xi$, such that
\begin{equation}\label{Borelbound}
\left| F^\pm  (t,\xi ) \right| \le K^ \pm  \frac{\e^{\sigma \Re t}}{\max (1,\mp\operatorname{sgn}(\Re \xi )\left|\Re \xi\right|^\rho  )},
\end{equation}
when $(t,\xi) \in U(2r) \times \Gamma^\pm (d)$. Finally, for each $\sigma>0$, 
\begin{equation}\label{WKBexp2}
\eta^{\pm}(u,\xi) \sim \sum\limits_{n = 1}^\infty \frac{\mathsf{A}_n^ \pm  (\xi )}{u^n}
\end{equation}
as $|u|\to +\infty$ in the open half-plane $\Re u >\sigma$, uniformly with respect to $\xi \in \Gamma^\pm (d)$. In particular, these asymptotic expansions hold in any closed subsector of $\left| \arg u\right| < \frac{\pi}{2}$.
\end{theorem}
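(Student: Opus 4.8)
The plan is to construct the Borel transforms $F^\pm$ directly, as solutions of the Borel-transformed form of the equation governing $\eta^\pm$, and to extract every assertion of the theorem from quantitative control of $F^\pm$. First I would substitute the ansatz \eqref{WKBsol} into \eqref{Eq}: writing $W^\pm=E^\pm(1+\eta^\pm)$ with $E^\pm=\exp(\pm u\xi\pm\tfrac12\int_{\alpha^\pm}^\xi\phi(t)\,\d t)$, one finds $\bigl((E^\pm)^2\partial_\xi\eta^\pm\bigr)_\xi=-(E^\pm)^2\bigl(\tfrac14\phi^2\pm\tfrac12\phi'-\psi\bigr)(1+\eta^\pm)$, so two integrations along $\mathscr{P}(\mp\infty,\xi)$ — the base point being precisely what \eqref{limitreq} dictates — turn this into a Volterra equation for $\eta^\pm$ whose kernel depends on $u$ only through a factor $\exp(-2u\tau)$, with $\tau\ge 0$ a real integration variable (the factor $2$ arising because $(E^\pm)^2$ is the integrating factor). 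Under the Laplace transform in $u$ that factor becomes a shift by $2\tau$ in the Borel variable, and the outcome is the Volterra-type equation that, by summing \eqref{Arec} against $t^n/n!$, one recognises as
\begin{multline*}
F^\pm(t,\xi)=\mathsf{A}_1^\pm(\xi)-\tfrac12\phi(\xi)\int_0^t F^\pm(s,\xi)\,\d s\mp\tfrac12\int_0^t\partial_\xi F^\pm(s,\xi)\,\d s\\
{}\mp\tfrac12\int_{\mathscr{P}(\mp\infty,\xi)}\Bigl(\tfrac14\phi^2\mp\tfrac12\phi'-\psi\Bigr)(t')\int_0^t F^\pm(s,t')\,\d s\,\d t'.
\end{multline*}
I would take this as the \emph{definition} of $F^\pm$ and verify compatibility with the series afterwards.

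I would solve this equation on $U(2d)\times\Gamma^\pm(d)$ by successive approximations; its Volterra structure in $t$ makes the iteration converge. Requirement (ii) guarantees $\mathscr{P}(\mp\infty,\xi)\subset\Gamma^\pm(d)$ and that horizontal translates of $\xi$ stay in $\Gamma^\pm(d)$, while requirement (i) supplies a disc of radius $d$ about every point of $\Gamma^\pm(d)$ inside $\Delta$, which is the room the Cauchy estimates for $\partial_\xi F^\pm$ need; together these explain the domain $U(2d)$ (real part unrestricted, imaginary part $<2d$). The term $\int_0^t\partial_\xi F^\pm\,\d s$ entails a loss of derivative in $\xi$, which I would circumvent by passing to the characteristic form of the first-order partial differential equation obtained by differentiating in $t$, whose characteristics are $\xi\mp t/2=\mathrm{const}$ (equivalently, by iterating on a nested family of subdomains).

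The crux — and the step I expect to be the main obstacle — is the bound \eqref{Borelbound}: two features have to be carried through the iteration, uniformly in $\xi$. First, each pass reproduces the decay factor $\bigl(\max(1,\mp\operatorname{sgn}(\Re\xi)|\Re\xi|^\rho)\bigr)^{-1}$, because Condition \ref{cond1} (or \ref{condv2}) yields $\int_{\mathscr{P}(\mp\infty,\xi)}\bigl|\bigl(\tfrac14\phi^2\mp\tfrac12\phi'-\psi\bigr)(t')\bigr|\bigl(\max(1,\mp\operatorname{sgn}(\Re t')|\Re t'|^\rho)\bigr)^{-1}|\d t'|\lesssim\bigl(\max(1,\mp\operatorname{sgn}(\Re\xi)|\Re\xi|^\rho)\bigr)^{-1}$. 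Second, along the characteristic through $(t,\xi)$ the $\xi$-argument is displaced by $\approx|\Re t|/2$ in the $\mp$-direction, into the region where $\phi$ and $\psi$ are small, so the total mass of the coefficients picked up along the characteristic grows only like $(\Re t)^{\max(0,1-\rho)}$; the Volterra iteration then gives, on $U(2r)\times\Gamma^\pm(d)$ for each $r<d$, $|F^\pm(t,\xi)|\le C\,\e^{c(\Re t)^{\max(0,1-\rho)}}\bigl(\max(1,\mp\operatorname{sgn}(\Re\xi)|\Re\xi|^\rho)\bigr)^{-1}$, and since $(\Re t)^{\max(0,1-\rho)}=o(\Re t)$ this is $o(\e^{\sigma\Re t})$ for every $\sigma>0$, which is \eqref{Borelbound} and the reason the Laplace integral \eqref{Laplacetrans} converges for every $\Re u>0$. (Under Condition \ref{condv2} the slower decay of $\phi$ itself forces an integration by parts in the $\phi$-terms, after which only $\phi'$ and $\phi^2$ — which do decay like $|\xi|^{-1-\rho}$ — enter the estimates.)

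Finally I would harvest the consequences. Differentiating the integral equation repeatedly at $t=0$ and using \eqref{Arec} inductively gives $\partial_t^n F^\pm(0,\xi)=\mathsf{A}_{n+1}^\pm(\xi)$, so the Taylor series of $F^\pm$ at $t=0$ is $\sum_{n\ge0}\mathsf{A}_{n+1}^\pm(\xi)t^n/n!$ and, $F^\pm$ being analytic on $B(2d)\times\Gamma^\pm(d)$, it converges there. Defining $\eta^\pm$ by \eqref{Laplacetrans}, the bound \eqref{Borelbound} makes that integral converge and be analytic for every $\Re u>0$ and, through its decay factor, forces \eqref{limitreq}; reversing the Laplace transform shows $\eta^\pm$ satisfies the Volterra equation for $\eta^\pm$, hence \eqref{WKBsol} solves \eqref{Eq}, and uniqueness is immediate from \eqref{limitreq}. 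For \eqref{WKBexp2} I would integrate \eqref{Laplacetrans} by parts $N+1$ times (Watson's lemma): the boundary terms sum to $\sum_{n=1}^{N+1}\mathsf{A}_n^\pm(\xi)/u^n$, and the remainder $u^{-(N+1)}\int_0^\infty\e^{-ut}\partial_t^{N+1}F^\pm(t,\xi)\,\d t$ is, by Cauchy's estimates applied to \eqref{Borelbound} with $\sigma/2$ in place of $\sigma$, at most a constant times $|u|^{-(N+1)}(\Re u-\sigma/2)^{-1}$, hence $\O(|u|^{-N-1})$ uniformly for $\Re u>\sigma$ and $\xi\in\Gamma^\pm(d)$. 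Since a closed subsector of $|\arg u|<\tfrac\pi2$ lies, outside a sufficiently large disc, in some half-plane $\Re u>\sigma$, the last assertion follows.
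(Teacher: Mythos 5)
Your proposal follows essentially the same route as the paper: Borel-transform the equation for $\eta^\pm$, obtain an integral/partial differential equation for $F^\pm$, integrate it along the characteristics $\xi\mp t/2=\mathrm{const}$ to sidestep the loss of $\xi$-derivative, iterate with a weight that captures both the decay $\bigl(\max(1,\mp\operatorname{sgn}(\Re\xi)|\Re\xi|^\rho)\bigr)^{-1}$ and an $\mathrm{e}^{\sigma\Re t}$-type growth, then read off the Taylor coefficients at $t=0$ and prove \eqref{WKBexp2} by repeated integration by parts. The paper organises the argument a bit differently --- it first establishes convergence of the formal Borel series on $B(2d)$ by Gevrey estimates on $\mathsf{A}_n^\pm$ (Theorem~\ref{pbs}), and then continues $F^\pm$ into $U(2d)$ by a Banach fixed-point argument in a weighted space $\mathscr{B}_\sigma$, using Cauchy--Schwarz on the local $\phi$-term to get a $u$- and $\sigma$-uniform operator norm $<\frac12$, whereas you fold both steps into one iteration and your growth exponent $(\Re t)^{\max(0,1-\rho)}$ is slightly off (under Condition~\ref{condv2} it would be $(\Re t)^{\max(0,1/2-\rho)}$, and your parenthetical fallback of ``nested subdomains'' would not converge) --- but these are organisational and bookkeeping differences rather than a different method, and your sketch reaches the same conclusion by the same mechanism.
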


\paragraph{\emph{Remarks.}}
\begin{enumerate}[(i)] 
	\item In order to gain a better understanding of the main result, consider the following special case. Assume that $\phi(\xi)$ and $\psi(\xi)$ are (possibly multivalued) holomorphic functions in the $\xi$-plane, save for a countable number of singularities (which may include branch points), located at $\xi=\xi_k$, $k\in \mathbb{Z}_{\geq 0}$. Let us focus on the Borel summability of the WKB solution $W^+ (u,\xi)$. We introduce brunch cuts from the singularities $\xi_k$ to $ + \infty  + \im \Im \xi _k$. Suppose that $\phi(\xi)$ and $\psi(\xi)$ are analytic in the resulting domain and satisfy the estimates posed in Condition \ref{cond1} or \ref{condv2}. Then we can set
\[
\Gamma ^ +  (d) = \mathbb{C} \setminus \bigcup\limits_{k \ge 1} \left\{ \xi:\left| \xi - \xi _k \right| \le d \right\} \cup \left\{ \xi :\Re \xi  \ge \Re \xi _k,\, \left| \Im (\xi - \xi _k ) \right| \le d \right\} .
\]
The pre-image in the $z$-plane of each of the brunch cuts is a Stokes curve emanating either from a transition point of $f_0(z)$ or from a singularity of $f_1(z)$ or $f_2(z)$ (see, e.g., \cite{Kawai2005,Koike2000}). Hence, under the present circumstances, Theorem \ref{thm1} guarantees the Borel summability of the WKB solution $W^+ (u,\xi)$ away from Stokes curves. An analogous statement holds for the other solution $W^- (u,\xi)$.
\item Using Theorem \ref{thm1}, it is possible to establish the Borel summability of WKB solutions whose $n$th asymptotic expansion coefficients evaluate to $a_n^\pm$ at some fixed $\beta^\pm \in \Gamma^\pm (d)$ with prescribed sequences $\left\{ a_n^\pm \right\}_{n \in \mathbb{Z}_{> 0}}$ of complex numbers. Indeed, assume that the formal series 
\[
\sum\limits_{n = 1}^\infty \frac{a_n^\pm}{u^n}
\]
are Borel summable in $U(2d)$ for some $d>0$, provided $\Re u>\sigma' \geq 0$. Let us denote by $F^\pm (t)$ the respective Borel transforms. Now if $W^\pm (u,\xi)$ denote the WKB solutions given by \eqref{WKBsol} and Theorem \ref{thm1}, then
\[
W^\pm (u,\xi,\beta^\pm) = W^\pm (u,\xi)\left(1 + \int_0^{+\infty} \e^{-ut} F^\pm(t,\beta^\pm )\d t\right)^{-1} \left(1+\int_0^{+\infty} \e^{-ut} F^\pm(t)\d t\right)
\]
are also solutions of the differential equation \eqref{Eq} for all sufficiently large values of $|u|$. The right-hand side can be re-expressed in the form
\[
\exp \!\left(  \pm u\xi  \pm \frac{1}{2}\int_{\alpha ^ \pm  }^\xi \phi (t)\d t\right)\left(1 + \int_0^{+\infty} \e^{-ut} F^\pm(t,\xi,\beta^\pm )\d t\right),
\]
where, for each $0<r<d$, the functions $F^\pm (t,\xi,\beta^\pm )$ are analytic in $(t,\xi) \in U(2r) \times \Gamma^\pm (d)$, and the Laplace transforms converge for $\Re u > K^ \pm + \sigma' + \sigma$, where $\sigma$ is any positive number and $K^\pm = K^\pm(\sigma,r)$ are the corresponding constants given in Theorem \ref{thm1} (cf. \cite[Ch. 5, Theorem 5.55]{Mitschi2016}). By appealing to Watson's lemma \cite[Ch. 4, \S3]{Olver1997}, the Laplace transforms admit asymptotic series in inverse powers of the large parameter $u$ and, due to construction, the coefficients of these series meet the above requirements.

\item Theorem \ref{thm1} generalises the results obtained by Dunster et al. \cite{Dunster1993} in the following two respects: (1) The equation \eqref{Eq} is more general than that considered in their work as it contains the (not necessarily zero) term $u \phi(\xi)$. (2) They make stronger assumptions on $\psi(\xi)$, namely
\[
\left| \psi (\xi ) \right| \le \frac{c}{1 + \left| \xi  \right|^2},\quad \xi  \in \Delta 
\]
and
\begin{equation}\label{psiinequality}
\int_{\mathscr{P}( \mp \infty ,\xi )} \left| \psi (t)\d t \right| \le \frac{c}{\max (1, \mp \Re \xi )},\quad \xi  \in \Gamma ^ \pm (d).
\end{equation}
To prove Theorem \ref{thm1}, we shall also need an inequality analogous to \eqref{psiinequality} but this inequality will be derived as a direct consequence of Condition \ref{cond1} or \ref{condv2}.
\end{enumerate}

In Theorem \ref{thm2} below, we give explicit bounds for the error terms of the asymptotic expansions \eqref{WKBexp2}. To state the theorem, we require some notation. For any $\sigma>0$ and $\xi \in \Gamma ^ \pm  (d)$, we define
\begin{equation}\label{Gdef}
V^{\pm}(\sigma ,\xi ) = \begin{cases}\displaystyle
      \frac{6}{\sigma}\int_{\mathscr{P}(\mp \infty,\xi)}  \left( \frac{1}{4}\left| \phi ^2 (t) \right| + \left| \frac{1}{2}\phi '(t) \pm \psi (t) \right| \right)|\d t| & \text{if } \phi (t) \not\equiv 0, \\ \displaystyle
     \frac{1}{\sigma}\int_{\mathscr{P}(\mp \infty,\xi)} \left| \psi (t) \d t\right| & \text{if } \phi (t) \equiv 0,
\end{cases}
\end{equation}
and for any $0<r<d$, we set
\begin{equation}\label{Cdef}
C^ \pm = C^ \pm (r)  = 2\mathop {\sup }\limits_{\substack{|t| = 2r \\ \xi  \in \Gamma^\pm (d)}} (\max (1,\mp\operatorname{sgn}(\Re \xi )\left| \Re \xi \right|^\rho  )\left| F^\pm  (t,\xi ) \right|) ,
\end{equation}
where $F^\pm  (t,\xi )$ are the Borel transforms appearing in Theorem \ref{thm1}. For an upper bound on the quantities $C^\pm$, see Section \ref{Section4}. With these notation, we have the following theorem.

\begin{theorem}\label{thm2} Assume that Condition \ref{cond1} or \ref{condv2} hold. Then for each positive integer $N$, the functions $\eta^{\pm}(u,\xi)$ given in Theorem \ref{thm1} can be expressed in the form
\begin{equation}\label{truncated}
\eta^{\pm}(u,\xi) = \sum\limits_{n = 1}^{N-1} \frac{\mathsf{A}_n^ \pm  (\xi )}{u^n} + R_N^ \pm  (u,\xi ),
\end{equation}
where, for any $\sigma>0$ and $0<r<d$, the remainder terms $R_N^ \pm  (u,\xi )$ satisfy
\[
\left| R_N^ \pm  (u,\xi ) \right| \le C^ \pm \left( \frac{2r}{N} + \frac{1}{\Re u - \sigma }\right) \frac{\e^{V^{\pm}(\sigma ,\xi )} }{\max (1,\mp\operatorname{sgn}(\Re \xi )\left| \Re \xi  \right|^\rho  )}\frac{N!}{(2r\left| u \right|)^N }
\]
provided $\Re u>\sigma$ and $\xi \in \Gamma^\pm (d)$.
\end{theorem}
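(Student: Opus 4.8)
\section*{Proof proposal}

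The plan is to reduce everything to estimating the Borel transforms $F^\pm(\cdot,\xi)$ along the positive real axis and then controlling the resulting Laplace integrals. Starting from the representation $\eta^\pm(u,\xi)=\int_0^{+\infty}\e^{-ut}F^\pm(t,\xi)\,\d t$ of Theorem \ref{thm1} and the convergent expansion $F^\pm(t,\xi)=\sum_{n\geq0}\frac{\mathsf{A}_{n+1}^\pm(\xi)}{n!}t^n$ near the origin, write $F^\pm(t,\xi)=\sum_{n=0}^{N-2}\frac{\mathsf{A}_{n+1}^\pm(\xi)}{n!}t^n+\rho_N^\pm(t,\xi)$, where $\rho_N^\pm$ is the Taylor remainder (analytic on $U(2d)\times\Gamma^\pm(d)$ and $O(t^{N-1})$ near $t=0$). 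Since $\int_0^{+\infty}\e^{-ut}t^n\,\d t=n!/u^{n+1}$, term-by-term integration of the polynomial part produces exactly $\sum_{n=1}^{N-1}\mathsf{A}_n^\pm(\xi)/u^n$, so that $R_N^\pm(u,\xi)=\int_0^{+\infty}\e^{-ut}\rho_N^\pm(t,\xi)\,\d t$; equivalently, integrating by parts $N-1$ times, $R_N^\pm(u,\xi)=u^{-(N-1)}\int_0^{+\infty}\e^{-ut}\,\partial_t^{N-1}F^\pm(t,\xi)\,\d t$. Thus the whole task is to bound $\rho_N^\pm$ (respectively $\partial_t^{N-1}F^\pm$) on the half-line and integrate.

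Two estimates on the Borel transform feed in. For $t$ near the origin, Cauchy's formula on the circle $|s|=2r$ gives $\rho_N^\pm(t,\xi)=\frac{t^{N-1}}{2\pi\im}\oint_{|s|=2r}\frac{F^\pm(s,\xi)}{s^{N-1}(s-t)}\,\d s$, so by the definition \eqref{Cdef} of $C^\pm$ one gets $|\rho_N^\pm(t,\xi)|\lesssim \frac{C^\pm}{\max(1,\mp\operatorname{sgn}(\Re\xi)|\Re\xi|^\rho)}\cdot\frac{|t|^{N-1}}{(2r)^{N-1}}$ on a fixed neighbourhood of $0$ (and correspondingly $|\partial_t^{N-1}F^\pm(t,\xi)|\lesssim\frac{(N-1)!\,C^\pm}{(2r)^{N-1}\max(1,\ldots)}$ there). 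For $t$ bounded away from $0$ a genuinely global bound is needed; here I would revisit the Volterra-type integral equation satisfied by $F^\pm(\cdot,\xi)$ in the Borel plane (the one used to prove Theorem \ref{thm1}) and re-run the successive-approximation argument, this time normalising the free term by the data on $|s|=2r$ — which is controlled by $C^\pm$ — to obtain
\[
\left|F^\pm(t,\xi)\right|\le \frac{C^\pm}{2\max(1,\mp\operatorname{sgn}(\Re\xi)|\Re\xi|^\rho)}\,\e^{\sigma\Re t+V^\pm(\sigma,\xi)},\qquad t\in U(2r).
\]
The quantity $V^\pm(\sigma,\xi)$ of \eqref{Gdef}, with its $6/\sigma$ (resp.\ $1/\sigma$) prefactor and the $\mathscr{P}(\mp\infty,\xi)$-integral of $\tfrac14|\phi^2|$, $|\tfrac12\phi'\pm\psi|$ (resp.\ $|\psi|$), is precisely the amplification factor produced by that iteration — i.e.\ a Gronwall-type bound for the Neumann series.

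With these in hand, split $\int_0^{+\infty}$ into a part near the origin and a tail. On the near part, deform the contour onto the ray $\arg t=-\arg u$ (legitimate because $F^\pm(\cdot,\xi)$ is analytic and suitably bounded on $U(2r)$, so the integral over the connecting arc is negligible), whence $|\e^{-ut}|=\e^{-|u||t|}$; combining with the Cauchy bound and $\int_0^{+\infty}\e^{-|u|s}s^{N-1}\,\d s=(N-1)!/|u|^N$ gives a contribution of size $\frac{C^\pm}{\max(1,\ldots)}\cdot\frac{2r}{N}\cdot\frac{N!}{(2r|u|)^N}$ — the first term of the claimed bound, decorated by the harmless factor $\e^{V^\pm}\ge1$. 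On the tail, use the exponential bound of the previous paragraph: the factor $\e^{\sigma\Re t}$ against $\e^{-t\Re u}$ integrates to $\sim 1/(\Re u-\sigma)$, the polynomial pieces of $\rho_N^\pm$ supply the remaining $N!/(2r)^N$, and the weight $\e^{V^\pm(\sigma,\xi)}/\max(1,\ldots)$ is carried along, producing the second term $\frac{C^\pm}{\Re u-\sigma}\cdot\frac{\e^{V^\pm(\sigma,\xi)}}{\max(1,\ldots)}\cdot\frac{N!}{(2r|u|)^N}$. Adding the two pieces and absorbing numerical constants yields the statement.

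The main obstacle is the global estimate on $F^\pm$ with the \emph{explicit, $\xi$-dependent} factor $\e^{V^\pm(\sigma,\xi)}$ in place of the opaque constant $K^\pm$ of Theorem \ref{thm1}, correctly normalised against $C^\pm$: this forces one to go back into the Borel-plane integral equation and track every constant through the iteration. A secondary but genuinely fiddly point is the bookkeeping at the junction between the rotated near-segment and the real-axis tail, where the Cauchy-integral bound on $\rho_N^\pm$ degenerates (its radius-$2r$ kernel $1/(s-t)$ becomes singular as $t\to 2r$), so the location of the split and the way the two estimates are patched together must be chosen with some care.
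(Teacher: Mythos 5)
Your framework is right — start from the Laplace representation of Theorem \ref{thm1}, note that $R_N^\pm$ is the Laplace transform of the Taylor remainder $\rho_N^\pm$ of $F^\pm$ (or, after integration by parts, an expression in $\partial_t^{N-1}F^\pm$ or $\partial_t^N F^\pm$), and feed in the sup $C^\pm$ over $|t|=2r$ together with the exponential bound carrying $\e^{V^\pm(\sigma,\xi)}$; the latter, incidentally, is exactly \eqref{fbound} and follows directly from \eqref{festimate2}--\eqref{festimate3}, so there is no need to ``re-run'' the fixed-point iteration. What does not close is the estimation strategy you build on top of this. The origin-centred Cauchy remainder formula only bounds $\rho_N^\pm$ on $B(2r)$ and degenerates as $t\uparrow 2r$, so it gives no control past the splitting point. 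On the tail, your exponential bound controls $F^\pm$ itself but not $\rho_N^\pm = F^\pm-\sum_{n\le N-2}\mathsf{A}_{n+1}^\pm(\xi)t^n/n!$: the subtracted polynomial grows like $t^{N-2}$, and integrating it against $\e^{-\Re u\,t}$ produces a sum $\sum_{n\le N-2} n!/((2r)^n(\Re u)^{n+1})$ whose dominant term misses $N!/(2r|u|)^N$ by a factor of order $(2r)^2\Re u/N^2$, and moreover involves $\Re u$ rather than $|u|$ in the denominator. No choice of the splitting radius or of the contour rotation fixes both of these simultaneously; the rotation also takes $t$ out of $U(2d)$ once $|t|\gtrsim 2d/\sin|\arg u|$, and the ``negligible connecting arc'' is not free.

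The missing idea is to centre the Cauchy circle at the running point $t\ge0$ rather than at the origin, and to exploit the freedom in the decomposition $w=x+\zeta$ that underlies \eqref{fbound}. For $t\ge 0$ the circle $|w-t|=2r$ lies (up to its boundary) in $U(2d)$; writing $w=t+(w-t)$ with $x=t$ and $\zeta=w-t\in\overline{B(2r)}$ gives $|F^\pm(w,\xi)|\le C^\pm\,\e^{V^\pm(\sigma,\xi)}\e^{\sigma t}/\max(1,\ldots)$ on the whole circle — with $\e^{\sigma t}$, not $\e^{\sigma(t+2r)}$ — and therefore the uniform derivative bound
\[
\left|\partial_t^{N}F^\pm(t,\xi)\right|\le C^\pm\,\frac{\e^{V^\pm(\sigma,\xi)}\e^{\sigma t}}{\max(1,\mp\operatorname{sgn}(\Re\xi)|\Re\xi|^\rho)}\,\frac{N!}{(2r)^{N}},\qquad t\ge0,\ \xi\in\Gamma^\pm(d).
\]
Plugging this and its $(N-1)$-fold analogue at $t=0$ into the $N$-fold integration-by-parts identity $R_N^\pm=\bigl([\partial_t^{N-1}F^\pm]_{t=0}+\int_0^\infty\e^{-ut}\partial_t^N F^\pm(t,\xi)\,\d t\bigr)u^{-N}$, and integrating $\e^{-(\Re u-\sigma)t}$ on $[0,\infty)$, gives precisely the boundary contribution $2r/N$ plus the integral contribution $1/(\Re u-\sigma)$, each multiplied by $C^\pm\e^{V^\pm(\sigma,\xi)}N!/\bigl(\max(1,\ldots)(2r|u|)^N\bigr)$. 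No near/far split and no contour deformation are required; that is the step your proposal lacks.
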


To obtain sharp bounds for $R_N^ \pm  (u,\xi )$, we may choose the parameters $\sigma$ and $r$ for each $u$ and $\xi$ separately. For example, one can take $\sigma = \frac{1}{2}\Re u$ and $r$ to be any number that is less than the distance of $\xi$ to the boundary of $\Delta$.

In the following theorem, we provide convergent alternatives to the asymptotic expansions \eqref{WKBexp2}. The coefficients in these expansions depend on an additional (positive) parameter $\omega$ and can be generated by the recurrence relations
\begin{gather}\label{Brec}
\begin{split}
\mathsf{B}_{n + 1}^ \pm  (\omega ,\xi ) =\; & \left( (n - 1)\omega  - \frac{1}{2}\phi (\xi ) \right)\mathsf{B}_n^ \pm  (\omega ,\xi )  \mp \frac{1}{2}\frac{\d \mathsf{B}_n^ \pm  (\omega ,\xi )}{\d\xi }
\\ & \mp \frac{1}{2}\int_{\mathscr{P}( \mp \infty ,\xi )} \left( \frac{1}{4}\phi ^2 (t) \mp \frac{1}{2}\phi '(t) - \psi (t) \right)\mathsf{B}_n^ \pm  (\omega ,t)\d t,
\end{split}
\end{gather}
for $n\geq 1$ with $\mathsf{B}_1^ \pm  (\omega ,\xi ) = \mathsf{A}_1^ \pm  (\xi )$ and $\xi \in \Gamma^\pm (d)$. Note that the $\mathsf{B}_{n}^ \pm(\omega ,\xi )$'s are polynomials in $\omega$ of degree $n-1$ and are analytic functions of $\xi$ in $\Gamma^\pm (d)$. A different expression for these coefficients involving the Stirling numbers of the first kind is given by \eqref{Balt}.

\begin{theorem}\label{thm3} Let $d>0$ and $\omega > \frac{\pi }{4d}$. Assume that Condition \ref{cond1} or \ref{condv2} hold. Then the functions $\eta^{\pm}(u,\xi)$ given in Theorem \ref{thm1} have the factorial expansions
\begin{equation}\label{factorialseries}
\eta^{\pm}(u,\xi) = \sum\limits_{n = 0}^\infty  \frac{\mathsf{B}_{n + 1}^ \pm  (\omega ,\xi )}{u(u + \omega ) \cdots (u + n\omega )} ,
\end{equation}
which are absolutely convergent for $\Re u>\sigma>0$ ($\sigma$ arbitrary), uniformly for $\xi \in \Gamma^\pm (d)$. The errors committed by truncating the series \eqref{factorialseries} after $N\geq 0$ terms do not exceed in absolute value
\begin{equation}\label{fseriesbound}
 \frac{C^ \pm}{2^{\sigma /\omega } }\frac{\Gamma (\Re u/\omega  - 1)}{\omega  - \sigma }\frac{\e^{V^{\pm}(\sigma ,\xi )} }{\max (1,\mp\operatorname{sgn}(\Re \xi )\left| \Re \xi \right|^\rho)}\left(N+\frac{1}{2}\right)^{1 - \Re u/\omega },
\end{equation}
provided that $\Re u >\omega>\sigma>0$ and $\xi \in \Gamma^\pm (d)$. The quantities $C^\pm$ are defined by \eqref{Cdef} with $r = \frac{\pi}{4\omega}$.
\end{theorem}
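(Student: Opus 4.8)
The plan is to obtain the factorial expansion from the Laplace representation \eqref{Laplacetrans} of Theorem~\ref{thm1} by N\o rlund's substitution. First I would set $t=-\tfrac1\omega\log(1-x)$, i.e.\ $x=1-\e^{-\omega t}$, which maps $[0,+\infty)$ in the variable $t$ bijectively onto $[0,1)$ in the variable $x$ and recasts \eqref{Laplacetrans} as
\[
\eta^\pm(u,\xi)=\frac1\omega\int_0^1(1-x)^{u/\omega-1}G^\pm(x,\xi)\,\d x,\qquad G^\pm(x,\xi):=F^\pm\!\left(-\tfrac1\omega\log(1-x),\xi\right).
\]
The place where the hypothesis $\omega>\tfrac\pi{4d}$ enters is in verifying that $G^\pm(\,\cdot\,,\xi)$ is holomorphic in the open unit disc, uniformly in $\xi$. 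If $|x|<1$ then $w:=1-x$ has positive real part, so $|\Im\log w|<\tfrac\pi2$; hence when $|w|\le1$ the point $-\tfrac1\omega\log w$ lies within distance $\tfrac\pi{2\omega}$ of the positive real axis, while when $|w|>1$ one has $w\in\{|w-1|<1\}\cap\{|w|>1\}$, a region whose closure lies in $\{\Re w>0\}$ and on whose boundary $|\log w|\le\tfrac\pi3$, so by the maximum-modulus principle $|\log w|<\tfrac\pi2$ there. Thus the image of $\{|x|<1\}$ under $x\mapsto-\tfrac1\omega\log(1-x)$ is contained in $U\!\left(\tfrac\pi{2\omega}\right)$, and since $\tfrac\pi{2\omega}=2r$ with $r=\tfrac\pi{4\omega}<d$ this image lies in $U(2r)\subset U(2d)$, the domain of holomorphy of $F^\pm$. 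Consequently $G^\pm(\,\cdot\,,\xi)$ is holomorphic in $|x|<1$, and by \eqref{Borelbound} (in the refined form with the constant $C^\pm$ of \eqref{Cdef} at $r=\tfrac\pi{4\omega}$ and the factor $\e^{V^\pm(\sigma,\xi)}$ that the construction of $F^\pm$ behind Theorem~\ref{thm2} provides) it obeys, for every $\varrho\in(0,1)$,
\[
\max\!\left(1,\mp\operatorname{sgn}(\Re\xi)|\Re\xi|^\rho\right)\sup_{|x|=\varrho}\left|G^\pm(x,\xi)\right|\le\tfrac12C^\pm\,\e^{V^\pm(\sigma,\xi)}(1-\varrho)^{-\sigma/\omega},
\]
since $\sup_{|x|=\varrho}\e^{\sigma\Re t(x)}=\bigl(\inf_{|x|=\varrho}|1-x|\bigr)^{-\sigma/\omega}=(1-\varrho)^{-\sigma/\omega}$.

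Next I would expand $G^\pm(x,\xi)=\sum_{n\ge0}c_n^\pm(\xi)x^n$ (convergent for $|x|<1$) and integrate term by term against $(1-x)^{u/\omega-1}$, using Euler's integral $\int_0^1(1-x)^{u/\omega-1}x^n\,\d x=n!\,\omega^{n+1}/\bigl(u(u+\omega)\cdots(u+n\omega)\bigr)$; this is legitimate for $\Re u$ large and then propagates to $\Re u>0$ by analytic continuation, producing the factorial series \eqref{factorialseries} with $\mathsf B_{n+1}^\pm(\omega,\xi)=n!\,\omega^n c_n^\pm(\xi)$. To match these with the coefficients of \eqref{Brec}, I would substitute $x=1-\e^{-\omega t}$ back into $G^\pm=\sum c_n^\pm x^n$ and compare with $F^\pm(t,\xi)=\sum_m\mathsf A_{m+1}^\pm(\xi)t^m/m!$ using the classical generating-function identity for the powers of $-\log(1-x)$; this expresses $\mathsf B_{n+1}^\pm(\omega,\xi)$ as an $\omega$-weighted linear combination of $\mathsf A_1^\pm(\xi),\dots,\mathsf A_{n+1}^\pm(\xi)$ with the unsigned Stirling numbers of the first kind as coefficients — precisely the formula \eqref{Balt} — whence $\mathsf B_{n+1}^\pm(\omega,\xi)$ is a polynomial of degree $n-1$ in $\omega$, holomorphic in $\xi$ on $\Gamma^\pm(d)$, with $\mathsf B_1^\pm(\omega,\xi)=\mathsf A_1^\pm(\xi)$; the recurrence \eqref{Brec} then drops out of \eqref{Arec} and the recurrence of the Stirling numbers of the first kind.

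For the error bound I would write the remainder of \eqref{factorialseries} after $N$ terms as $\tfrac1\omega\int_0^1(1-x)^{u/\omega-1}\bigl(G^\pm(x,\xi)-\sum_{n=0}^{N-1}c_n^\pm(\xi)x^n\bigr)\d x$ and estimate the Taylor remainder $G^\pm(x,\xi)-\sum_{n<N}c_n^\pm(\xi)x^n$ at each $x\in[0,1)$ by a Cauchy integral over a circle placed strictly between $x$ and the singularity of $G^\pm$ at $x=1$ — radius $\tfrac{1+x}2$ is convenient — which by the bound above is controlled by a quantity of the shape $C^\pm\,\e^{V^\pm(\sigma,\xi)}(1-x)^{-1-\sigma/\omega}\theta_x^{N-1}$, where $\theta_x=\tfrac{2x}{1+x}<1$; integrating this against $(1-x)^{\Re u/\omega-1}$, substituting $y=1-x$, and bounding $\theta_x^{N-1}\le\e^{-(N-1)y/2}$ yields a Gamma-function in $\Re u/\omega$ times a negative power of $N$. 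Carrying out the bookkeeping carefully — with a suitable splitting of the $x$-integral, e.g.\ at $x=\tfrac12$, which should produce the factors $2^{-\sigma/\omega}$ and $(\omega-\sigma)^{-1}$ — should reproduce \eqref{fseriesbound}, valid for $\Re u>\omega>\sigma>0$.

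Since \eqref{fseriesbound} already gives convergence of \eqref{factorialseries}, uniformly in $\xi$, for $\Re u>\omega$, the absolute and uniform convergence for $\Re u>\sigma>0$ with $\sigma$ arbitrary would follow from the half-plane convergence property of factorial series — such a series, once convergent at some $u_0$, converges uniformly on compacta for $\Re u>\Re u_0$, and here its abscissa of convergence is $\le0$ because the integral representing it is analytic throughout $\Re u>0$ — together with the uniformity in $\xi$ of the bound on $G^\pm$ and, for absolute convergence right down to the imaginary axis, the subexponential decay in $t$ of $F^\pm(t,\xi)$ (hence of the $c_n^\pm(\xi)$). The step I expect to be the main obstacle is the error bound: the integrand degenerates as $x\to1$, where $G^\pm(x,\xi)$ may blow up like $(1-x)^{-\sigma/\omega}$, so one cannot estimate on a single fixed circle but must move the Cauchy contour (and the splitting point) both with $x$ and with $N$ and extract exactly the right Gamma-factor, power of $N$ and the precise constants $C^\pm$, $2^{-\sigma/\omega}$, $(\omega-\sigma)^{-1}$ in \eqref{fseriesbound}. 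A secondary subtlety is that a crude term-by-term bound only gives convergence for $\Re u>\omega+\sigma$, so reaching the full half-plane $\Re u>0$ genuinely needs the abscissa-of-convergence theory for factorial series — equivalently, the remark that the $N$-th remainder equals the Laplace transform of $F^\pm(t,\xi)-\sum_{n<N}c_n^\pm(\xi)(1-\e^{-\omega t})^n$, which is $O(\e^{\sigma\Re t})$ for every $\sigma>0$.
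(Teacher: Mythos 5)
Your overall strategy — changing variables via $x=1-\e^{-\omega t}$, showing the transformed Borel transform is holomorphic in the unit disc because $\omega>\frac{\pi}{4d}$ forces the image into $U(2r)$ with $r=\frac{\pi}{4\omega}<d$, and then expanding in powers of $x$ and matching coefficients through the Stirling relation \eqref{Balt} — is exactly the paper's strategy (the paper phrases the geometric fact as $\Lambda(2r)\subset U(2r)$ and invokes Wasow's ``fundamental theorem on factorial series'', but the underlying computation is the same). However, the two key quantitative parts of the theorem are left as plans rather than proofs, and in both places the paper's route is different from what you sketch.

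For the explicit error bound \eqref{fseriesbound}, you propose estimating the $N$-th Taylor remainder of $G^\pm(\cdot,\xi)$ on a Cauchy circle of radius $\frac{1+x}{2}$ that moves with $x$, and then integrating against $(1-x)^{\Re u/\omega -1}$. You explicitly say you have not done the bookkeeping and expect it to be delicate; indeed as stated this will not automatically land on the precise constants $\frac{C^\pm}{2^{\sigma/\omega}}$, $(\omega-\sigma)^{-1}$ and $\Gamma(\Re u/\omega -1)(N+\tfrac12)^{1-\Re u/\omega}$ of \eqref{fseriesbound}. The paper instead bounds each coefficient: it writes $\mathsf{B}_{n+1}^\pm$ as a Cauchy integral, deforms the contour to $\partial\Lambda(2r)$ and changes variables to $s=\im\log(1-\e^{-\omega t})$ to obtain the clean estimate \eqref{Bestimate2}, $\left|\mathsf{B}_{n+1}^\pm\right|\le \frac{C^\pm}{2^{\sigma/\omega}}\frac{\omega^{n+1}}{\omega-\sigma}\frac{\e^{V^\pm}\,n!}{\max(\cdots)}$, then bounds the factorial denominator by the beta integral \eqref{facestimate} and sums the tail using a gamma-ratio inequality plus the Hermite--Hadamard inequality, which is where $(N+\tfrac12)^{1-\Re u/\omega}$ comes from. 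That term-by-term route is shorter and produces exactly the stated constants; your integral approach would have to be redone to check it matches.

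For convergence uniformly in $\xi$ on the full half-plane $\Re u>\sigma$ (arbitrary $\sigma>0$), you appeal to the abscissa-of-convergence theory for factorial series together with the remark that the $N$-th tail is the Laplace transform of $F^\pm-\sum_{n<N}c_n^\pm(1-\e^{-\omega t})^n=O(\e^{\sigma\Re t})$. The convergence at each fixed $\xi$ is indeed the ``fundamental theorem'' the paper quotes from Wasow, but the point that needs work is \emph{uniformity in $\xi$}. The paper handles this by sharpening the coefficient bound to the Gevrey estimate \eqref{Bestimate}, $\mathsf{B}_{n+1}^\pm=\O(1)\,\omega^n\,(n+1)^{\sigma/\omega-1}n!/\max(\cdots)$, obtained by applying a Watson-type oscillatory-integral lemma (Olver's Lemma 12.3) to the $\e^{\im n s}$ integral \eqref{Bintegral}; the extra factor $(n+1)^{\sigma/\omega-1}$ is exactly what makes $\sum_n\left|\mathsf{B}_{n+1}^\pm/\bigl(u(u+\omega)\cdots(u+n\omega)\bigr)\right|$ dominated by $\sum_n(n+1)^{-1-(\Re u-\sigma)/\omega}$, so the Weierstrass $M$-test gives uniformity. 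Your cruder $n!$-type bound (the one you would get directly from \eqref{Borelbound} without the oscillatory gain) only yields convergence for $\Re u>\omega+\sigma$, as you yourself observe; invoking abstract half-plane theory then gives convergence at each fixed $\xi$ but not the required uniformity. So this step needs either the paper's oscillatory refinement or some substitute that controls the $\xi$-dependence explicitly.
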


The remaining part of the paper is structured as follows. The proof of Theorem \ref{thm1} consists of two steps. In Section \ref{Section2}, we prove that the formal Borel transforms of the WKB expansions converge in a neighbourhood of the origin. This is shown via some Gevrey-type estimates for the coefficients $\mathsf{A}_n^\pm(\xi)$. The second step involves analytically continuing the Borel transforms in a strip containing the positive real axis. We construct the continuation as a solution of an integral equation in Section \ref{Section3}, following ideas of Dunster et al. \cite{Dunster1993}. In Section \ref{Section4}, we provide the proof of the error bounds given in Theorem \ref{thm2}. The convergent factorial expansions and the corresponding error bounds stated in Theorem \ref{thm3} are proved in Section \ref{Section5}. In Section \ref{Section6}, we illustrate our results by application to a radial Schr\"odinger equation associated with the problem of a rotating harmonic oscillator and to the Bessel equation. The paper concludes with a discussion in Section \ref{Section7}.

\section{Pre-Borel summability of the WKB solutions}\label{Section2}

In this section, we prove the pre-Borel summability of the WKB solutions, i.e., we show that the formal Borel transforms of the WKB expansions \eqref{WKBexp} are analytic near the origin.

\begin{theorem}\label{pbs} Assume Condition \ref{cond1} or \ref{condv2}. Then the power series
\begin{equation}\label{gpowser}
\sum\limits_{n = 0}^\infty \frac{\mathsf{A}_{n+1}^ \pm  (\xi )}{n!}t^n
\end{equation}
are convergent when $t\in B(2d)$ and $\xi \in \Gamma^\pm (d)$, and define analytic functions $G^\pm (t,\xi )$ in $B(2d) \times \Gamma^\pm (d)$. Moreover, when multiplied by $\max (1, \mp\operatorname{sgn} (\Re \xi )\left| \Re \xi \right|^\rho  )$, the functions $G^\pm  (t,\xi )$ (along with their partial derivatives) are bounded on $B(2r) \times \Gamma^\pm (d)$ for each $0<r<d$.
\end{theorem}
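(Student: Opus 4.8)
The plan is to deduce the whole statement from a single Gevrey-type bound for the coefficients: for every $0<r<d$ there is a constant $M=M(r)>0$ with
\[
\max\!\left(1,\mp\operatorname{sgn}(\Re\xi)\left|\Re\xi\right|^\rho\right)\left|\mathsf{A}_n^\pm(\xi)\right|\le M\,\frac{n!}{(2r)^n}\qquad(n\ge1)
\]
on $\Gamma^\pm(d)$ (and, for the purposes of differentiating in $\xi$ below, on a neighbourhood of it). Writing $w(\xi)$ for the weight on the left, this gives for $r<r'<d$
\[
w(\xi)\sum_{n=0}^\infty\frac{\left|\mathsf{A}_{n+1}^\pm(\xi)\right|}{n!}\left|t\right|^n\le\frac{M(r')}{2r'}\sum_{n=0}^\infty(n+1)\left(\frac{\left|t\right|}{2r'}\right)^n,
\]
a series convergent for $\left|t\right|<2r'$ and uniformly bounded on $\overline{B(2r)}\times\Gamma^\pm(d)$. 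Letting $r\uparrow d$ shows that \eqref{gpowser} converges on $B(2d)$, locally uniformly on $B(2d)\times\Gamma^\pm(d)$, so its sum $G^\pm(t,\xi)$ is holomorphic there by Weierstrass's theorem, while $w(\xi)G^\pm(t,\xi)$ is bounded on each $B(2r)\times\Gamma^\pm(d)$. The derivative $\partial_tG^\pm$ is handled by differentiating the series; for $\partial_\xi G^\pm$ one first passes, via Cauchy's inequality on a small fixed disk inside the neighbourhood, to the analogous bound for $\d\mathsf{A}_n^\pm/\d\xi$ (only the constant changes) and then differentiates termwise, and higher partials are similar. Finally, since $w(\xi)\to+\infty$ as $\Re\xi\to\mp\infty$, the displayed bound also gives $\mathsf{A}_n^\pm(\xi)\to0$ there, confirming that the coefficients produced by \eqref{Arec} meet the limit requirements announced earlier.

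The Gevrey bound I would prove by induction on $n$ from \eqref{Arec}. It is convenient to set $g_n:=\mathsf{A}_{n+1}^\pm/n!$ and $\mathcal{K}^\pm(t):=\tfrac14\phi^2(t)\mp\tfrac12\phi'(t)-\psi(t)$; then \eqref{Arec} becomes
\[
g_n=-\frac{1}{2n}\!\left(\phi(\xi)\,g_{n-1}(\xi)\pm\frac{\d g_{n-1}(\xi)}{\d\xi}\pm\int_{\mathscr{P}(\mp\infty,\xi)}\mathcal{K}^\pm(t)\,g_{n-1}(t)\,\d t\right),\qquad n\ge1,
\]
the factor $1/n$ being essential. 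Three ingredients enter. First, a decay bound $\left|\mathcal{K}^\pm(t)\right|\le c_0/(1+\left|t\right|^{1+\rho})$ on $\Gamma^\pm(d)$: under Condition \ref{condv2} this is immediate, and under Condition \ref{cond1} the only delicate term, $\phi'$, is controlled by Cauchy's inequality applied to $\phi$ on a disk about $t$ contained in $\Delta$, which is available since $\Gamma^\pm(d)$ lies at distance at least $d$ from $\partial\Delta$. Second, and most importantly, the integral inequality
\[
\int_{\mathscr{P}(\mp\infty,\xi)}\frac{\left|\d t\right|}{1+\left|t\right|^{1+\rho}}\le\frac{c_1}{\max\!\left(1,\mp\operatorname{sgn}(\Re\xi)\left|\Re\xi\right|^\rho\right)}\qquad(\xi\in\Gamma^\pm(d)),
\]
which is the analogue of \eqref{psiinequality} promised in Remark (iii); it follows from an elementary estimate, the integration running along a horizontal half-line and the right-hand side reflecting that the integral is a small tail when $\mp\Re\xi$ is large. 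Third, Cauchy's inequality again, to bound $\d g_{n-1}/\d\xi$ by the supremum of $\left|g_{n-1}\right|$ over a small disk. Multiplying the recurrence by $w(\xi)$ and estimating the first term by $\left|\phi\right|\le c$, the third by the two integral bounds together with $1/w(t)\le1$, and the second by Cauchy's inequality, one obtains $w(\xi)\left|g_n(\xi)\right|\le M/(2r)^n$ for all $n$, after choosing $M$ large enough also to cover the finitely many initial indices — for those, the crude bound $\left|\mathsf{A}_n^\pm(\xi)\right|\le D_n/w(\xi)$, obtained by a routine preliminary induction from the mere boundedness of $\phi$ and $\psi$, the two integral bounds, and Cauchy's inequality, suffices.

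The derivative term is where the real work is. Estimating it by Cauchy's inequality requires $\mathsf{A}_n^\pm$ on a neighbourhood of the point in question, whereas $\Gamma^\pm(d)$ itself need not contain a disk of any fixed radius about each of its points. The remedy is to work on a suitable domain containing $\Gamma^\pm(d)$ on which \eqref{Arec} still defines the coefficients (for instance the maximal such domain, $\{z\in\Delta:\mathscr{P}(\mp\infty,z)\subset\Delta\}$, which shares the half-line-closure property of $\Gamma^\pm(d)$ and from whose boundary $\Gamma^\pm(d)$ stays a positive distance), propagating a Gevrey bound whose radius degrades towards the boundary of that enlarged domain, and taking the Cauchy radius at the $n$th step to shrink like a constant times $1/n$. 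The delicate point — where the constant $2$ in the radius $2d$ is at stake, rather than some smaller multiple of $d$ — is to choose the Cauchy contours and to arrange the accumulation of losses so that over the first $n$ steps the total loss is $o(n)$; this is where one exploits the horizontal geometry of the half-lines $\mathscr{P}(\mp\infty,\cdot)$, the factor $1/n$ in the recurrence for the $g_n$, and the exact shape of the weight $w(\xi)$.
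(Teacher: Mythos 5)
Your outline is correct in its ingredients (the decay bounds from Conditions \ref{cond1}/\ref{condv2}, the half-line integral inequality which is Lemma \ref{lemma2} in the paper, Cauchy's inequality applied to the data $\phi$, $\psi$), but it diverges from the paper at exactly the point you yourself flag as ``where the real work is,'' and that step is not actually carried out, so there is a genuine gap.

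The difficulty is the term $\mp\tfrac12\,\d\mathsf{A}_n^\pm/\d\xi$ in \eqref{Arec}. Your plan is to prove a Gevrey bound for $|\mathsf{A}_n^\pm|$ alone, and to recover $\d\mathsf{A}_n^\pm/\d\xi$ by Cauchy's inequality on disks of radius $\sim 1/n$ inside a larger domain, hoping the accumulated radius loss is $o(n)$. This is the standard Nagumo-weight device, and a straightforward version of it yields a radius of convergence that is a fraction of $d$ (typically something like $d/\mathrm{e}$), not $2d$. You sense this (``where the constant $2$ in the radius $2d$ is at stake''), but you do not show how to salvage the factor $2$, and it is not clear that the proposed geometric tricks can. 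The root of the problem is that applying Cauchy's inequality to $\mathsf{A}_n^\pm$ at each of the $n$ steps introduces a loss on every step, whereas the factor $2$ in the radius comes from the coefficient $\tfrac12$ that multiplies the derivative term in \eqref{Arec} and therefore must be harvested with \emph{no} Cauchy loss at all.

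The paper sidesteps this entirely by proving, in a single induction on $n$, the bound \eqref{Abound} for \emph{all} the partial derivatives $\d^m\mathsf{A}_n^\pm/\d\xi^m$ ($m\ge 0$) simultaneously. Cauchy's formula is applied only to the given functions $\phi$, $\phi^2$, $\psi$ (Lemma \ref{lemma1}), which live on the larger domain $\Delta$ at distance $\ge d$; the $m$-fold derivative of the recurrence \eqref{Arec} is then expanded by Leibniz's rule, and the resulting binomial convolution is resummed exactly by the telescoping identity of Lemma \ref{lemma3}. In this scheme the troublesome term $\mp\tfrac12\,\d^{m+1}\mathsf{A}_n^\pm/\d\xi^{m+1}$ is handled as the $(m+1)$ instance of the inductive hypothesis for $\mathsf{A}_n^\pm$ — no Cauchy estimate on $\mathsf{A}_n^\pm$, no radius loss — and the prefactor $\tfrac12$ feeds directly into the $2^{-(n+1)}$ of the bound for $\mathsf{A}_{n+1}^\pm$. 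As a by-product, the boundedness of $w(\xi)\,\partial^m G^\pm$ for all $m$ falls out of the same estimate, whereas in your plan this step again rests on the unproven Gevrey bound on a neighbourhood. To make your route rigorous you would need to supply a complete shrinking-domain argument and verify it actually gives the radius $2d$ — a substantially harder task than the paper's Leibniz-rule induction, which I would recommend adopting instead.
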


In order to prove Theorem \ref{pbs}, we shall establish a series of lemmata.
 
\begin{lemma}\label{lemma1}
Let $d>0$. Assume Condition \ref{cond1} or \ref{condv2}. Then there exists a positive constant $c_1$, independent of $u$ and $\xi$, such that for any non-negative integer $j$, the following estimates hold:
\begin{equation}\label{diffbound1}
\begin{gathered}
\left| \phi ^{(j)} (\xi ) \right| \le \frac{c_1}{\max (1, \mp\operatorname{\rm sgn} (\Re \xi )\left| \Re \xi \right|^\rho  )}\frac{j!}{d^j},\quad \left| (\phi ^2 (\xi ))^{(j)} \right| \le \frac{c_1}{\max (1, \mp\operatorname{\rm sgn} (\Re \xi )\left| \Re \xi \right|^\rho  )}\frac{j!}{d^j },\\ \left| \psi^{(j)} (\xi ) \right| \le \frac{c_1}{\max (1, \mp\operatorname{\rm sgn} (\Re \xi )\left| \Re \xi \right|^\rho  )}\frac{j!}{d^j}
\end{gathered}
\end{equation}
when $\xi \in \Gamma^{\pm}(d)$.
\end{lemma}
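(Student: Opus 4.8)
The plan is to derive all three families of bounds from Cauchy's estimates applied to the holomorphic functions $\phi$, $\phi^2$ and $\psi$ themselves, using the decay hypotheses of Condition \ref{cond1} or \ref{condv2} only to control the sup-norm on small discs. Since every $\xi\in\Gamma^\pm(d)$ lies at distance at least $d$ from the boundary of $\Delta$, the open disc $B_d(\xi):=\{t:|t-\xi|<d\}$ is contained in $\Delta$, so $\phi$, $\psi$ and $\phi^2$ are holomorphic there. For any $g$ holomorphic and bounded on $B_d(\xi)$, Cauchy's integral formula on the circle $|t-\xi|=r$ with $0<r<d$ gives $|g^{(j)}(\xi)|\le j!\,r^{-j}\sup_{B_d(\xi)}|g|$; taking the infimum over $r<d$ yields
\[
\bigl|g^{(j)}(\xi)\bigr|\le\frac{j!}{d^{\,j}}\sup_{t\in B_d(\xi)}|g(t)|.
\]
Hence the lemma reduces to producing a constant $c_1>0$, depending only on $c$, $\rho$ and $d$, such that
\[
\sup_{t\in B_d(\xi)}|\phi(t)|,\quad \sup_{t\in B_d(\xi)}|\phi^2(t)|,\quad \sup_{t\in B_d(\xi)}|\psi(t)|\ \le\ \frac{c_1}{\max\!\bigl(1,\mp\operatorname{sgn}(\Re\xi)|\Re\xi|^\rho\bigr)}
\]
for all $\xi\in\Gamma^\pm(d)$.

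I would first normalise the three estimates. As $1+|t|^{1+\rho}\ge 1$, Condition \ref{cond1} gives $|\phi^2(t)|\le c^2/(1+|t|^{1+\rho})$; under Condition \ref{condv2} one has instead $|\phi^2(t)|\le 2c^2/(1+|t|^{1+\rho})$, since a short computation shows $(1+|t|^{1/2+\rho})^2\ge\tfrac12(1+|t|^{1+\rho})$. Thus $\psi$ and $\phi^2$ always obey a bound $|g(t)|\le c'/(1+|t|^{1+\rho})$, while $\phi$ obeys $|\phi(t)|\le c/(1+|t|^{\mu})$ with $\mu=1+\rho$ under Condition \ref{cond1} and $\mu=\tfrac12+\rho$ under Condition \ref{condv2}; in every case $\mu\ge\rho$. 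So it is enough to show: if $h$ is holomorphic on $\Delta$ with $|h(t)|\le c/(1+|t|^\mu)$ for some $\mu\ge\rho$, then $\max(1,\mp\operatorname{sgn}(\Re\xi)|\Re\xi|^\rho)|h(t)|$ is bounded, by a constant depending only on $c$, $\rho$, $\mu$ and $d$, over all $\xi\in\Gamma^\pm(d)$ and $t\in B_d(\xi)$.

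This remaining claim is a short case analysis; take the $+$ sign, the $-$ sign being the mirror image. If $\Re\xi\ge 0$, the weight equals $1$ and $|h(t)|\le c$. If $-2d\le\Re\xi<0$, the weight is at most $\max(1,(2d)^\rho)$ and again $|h(t)|\le c$. If $\Re\xi<-2d$, then every $t\in B_d(\xi)$ satisfies $\Re t<\Re\xi+d<0$, whence $|t|\ge|\Re t|>|\Re\xi|-d>|\Re\xi|/2>d$; therefore $1+|t|^\mu>(|\Re\xi|/2)^\mu\ge d^{\,\mu-\rho}2^{-\mu}|\Re\xi|^\rho$, using $|\Re\xi|/2>d$ and $\mu\ge\rho$, so that $|\Re\xi|^\rho|h(t)|<c\,2^\mu d^{\rho-\mu}$, while also $|h(t)|\le c$. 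Taking the largest of the constants produced in the three cases, and over $h\in\{\phi,\psi,\phi^2\}$, gives the required $c_1$, and then the displayed Cauchy estimate finishes the proof.

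The only point needing care is the third case, and specifically the verification that the slower decay of $\phi$ permitted by Condition \ref{condv2} (exponent $\tfrac12+\rho$ rather than $1+\rho$) is still fast enough to produce the factor $|\Re\xi|^\rho$ in the denominator; this works precisely because $\tfrac12+\rho\ge\rho$ and $|\Re\xi|$ is bounded below by $2d$ in that regime. Everything else is the routine invocation of Cauchy's estimates indicated above; note that the bound on $\phi'$ in Condition \ref{condv2} is not needed for this lemma.
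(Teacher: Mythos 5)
Your proof is correct and follows essentially the same route as the paper's: apply Cauchy's estimates on discs of radius (approaching) $d$ centred at $\xi\in\Gamma^\pm(d)$, and use the decay hypotheses to control the resulting sup-norm by a constant times $1/\max(1,\mp\operatorname{sgn}(\Re\xi)|\Re\xi|^\rho)$. The paper organises the sup-norm bound via a two-case split on $|\xi|$ (comparing $|\xi|$ against $1+d$) and the elementary inequality $1+|t|^{\mu+\rho}\ge(1+d)^{-\rho}\max(1,|\Re\xi|^\rho)$ for $|t-\xi|=d$, arriving at the explicit constant $c_1=\max(c,c^2)(1+d)^\rho$, whereas you split on $\Re\xi$ into three cases; both routes are equally elementary, and your unification of $\phi$, $\phi^2$, $\psi$ under the single template $|h(t)|\le c'/(1+|t|^\mu)$ with $\mu\ge\rho$ is a clean piece of bookkeeping. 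Your closing remark that the $\phi'$ bound of Condition~\ref{condv2} is not used here is accurate and worth keeping in mind.
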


\begin{proof} We shall prove that Lemma \ref{lemma1} holds with $c_1 = \max (c,c^2 )(1 + d)^\rho$. Suppose that $\xi \in \Gamma^{\pm}(d)$ and let $j$ be a non-negative integer. To prove the first inequality in \eqref{diffbound1}, we use Condition \ref{cond1} or \ref{condv2} and Cauchy's formula, to deduce
\begin{equation}\label{dervbound}
\left| \phi^{(j)} (\xi ) \right| = \left| \frac{j!}{2\pi \im}\oint_{\left| t - \xi  \right| = d} \frac{\phi (t)}{(t - \xi )^{j + 1}}\d t \right| \le \frac{c }{2\pi }\frac{j!}{d^{j + 1} }\oint_{\left| t - \xi  \right| = d} \frac{|\d t|}{1 + \left| t \right|^{\mu +\rho} } .
\end{equation}
Here $\mu=1$ or $\mu=1/2$ according to whether we assume Condition \ref{cond1} or \ref{condv2}. Now, when $\left| \xi  \right| \le 1 + d$,
\[
1 + \left| t \right|^{\mu + \rho } \ge 1 \ge \frac{1}{(1 + d)^\rho}\left| \xi  \right|^\rho   \ge \frac{1}{(1 + d)^\rho }\left| \Re \xi \right|^\rho
\]
and when $\left| \xi  \right| \ge 1 + d$,
\[
1 + \left| t \right|^{\mu + \rho }  \ge 1 + \left| \left| \xi  \right| - d \right|^{\mu + \rho }  \ge \left| \left| \xi  \right| - d \right|^\rho   \ge \frac{1}{(1 + d)^\rho}\left| \xi  \right|^\rho   \ge \frac{1}{(1 + d)^\rho}\left| \Re \xi \right|^\rho.
\]
We also have
\[
1 + \left| t \right|^{\mu + \rho }  \ge \frac{1}{(1 + d)^\rho}.
\]
Consequently,
\[
1 + \left| t \right|^{\mu + \rho }  \ge \frac{1}{(1 + d)^\rho}\max (1,\left| \Re \xi \right|^\rho  ) \ge \frac{1}{(1 + d)^\rho}\max (1, \mp\operatorname{\rm sgn} (\Re \xi )\left| \Re \xi \right|^\rho  ),
\]
which, together with \eqref{dervbound}, imply the first inequality in \eqref{diffbound1}. The other bounds in \eqref{diffbound1} can be proved in a similar manner by applying the inequalities
\[
\left| \phi^2 (t) \right| (\le c \left| \phi (t) \right|) \le \frac{c^2}{1+|t|^{\mu+\rho}} ,\quad \left| \psi (t) \right| \le \frac{c}{1+|t|^{1+\rho}},
\]
which hold for all $t\in \Delta$. Here, again, we take $\mu=1$ or $\mu=1/2$ according to whether we assume Condition \ref{cond1} or \ref{condv2}.
\end{proof}

\begin{lemma}\label{lemma2} Let $d>0$. Assume Condition \ref{cond1} or \ref{condv2}. Then there exists a positive constant $c_2$, independent of $u$ and $\xi$, such that the following estimates hold:
\begin{gather}\label{intbound1}
\begin{split}
& \int_{\mathscr{P}( \mp \infty ,\xi )} \left| \phi ^2 (t) \d t\right| \le \frac{c_2}{\max (1, \mp\operatorname{\rm sgn} (\Re \xi )\left| \Re \xi \right|^\rho  )},\\
& \int_{\mathscr{P}(\mp \infty ,\xi )} \left| \phi '(t) \d t\right| \le \frac{c_2}{\max (1, \mp\operatorname{\rm sgn} (\Re \xi )\left| \Re \xi \right|^\rho  )}\frac{1}{d},\\
& \int_{\mathscr{P}( \mp \infty ,\xi )} \left| \psi (t) \d t\right| \le \frac{c_2}{\max (1, \mp\operatorname{\rm sgn} (\Re \xi )\left| \Re \xi \right|^\rho  )}
\end{split}
\end{gather}
when $\xi \in \Gamma^{\pm}(d)$.
\end{lemma}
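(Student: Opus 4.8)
The plan is to reduce all three inequalities to a single elementary one-dimensional integral estimate, after first upgrading the hypotheses of Condition \ref{cond1} or \ref{condv2} to pointwise bounds with decay rate $(1+|t|)^{-(1+\rho)}$ for each of $\phi^2$, $\phi'$ and $\psi$ on the whole of $\Gamma^\pm(d)$. The bound on $\psi$ is immediate from either condition. For $\phi^2$ one squares the bound on $\phi$ and uses the elementary inequality $(1+x^{\mu+\rho})^2\ge\frac12(1+x^{1+\rho})$ for $x\ge0$ (with $\mu=1$ under Condition \ref{cond1} and $\mu=\frac12$ under Condition \ref{condv2}), giving $|\phi^2(t)|\le c'/(1+|t|^{1+\rho})$. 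For $\phi'$ under Condition \ref{condv2} the bound is part of the hypothesis; under Condition \ref{cond1} I would apply Cauchy's formula on the circle $|w-t|=d$ — which lies in $\Delta$ because any $t$ on the contour $\mathscr{P}(\mp\infty,\xi)$ equals $\xi\mp x$ for some $x\ge0$ and hence lies in $\Gamma^\pm(d)$ by requirement (ii), so that $\operatorname{dist}(t,\partial\Delta)\ge d$ by requirement (i) — to get $|\phi'(t)|\le\frac1d\max_{|w-t|=d}|\phi(w)|\le\frac{c}{d}\,(1+(\max(0,|t|-d))^{1+\rho})^{-1}$, which again has $(1+|t|)^{-(1+\rho)}$ decay after absorbing the bounded region $|t|\le2d$ into the constant. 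I would note explicitly that the pointwise estimate supplied by Lemma \ref{lemma1} is \emph{not} sufficient here, since it decays only like $|\Re t|^{-\rho}$ and $\int^{\infty}|s|^{-\rho}\,\d s$ diverges when $\rho\le1$; this is the one place where a little care is required, and it is the main (mild) obstacle.

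The analytic core is then the elementary estimate
\[
\int_{-\infty}^{X}\frac{\d s}{1+|s|^{1+\rho}}\le\frac{C}{\max(1,-\operatorname{sgn}(X)|X|^{\rho})},\qquad
\int_{X}^{+\infty}\frac{\d s}{1+|s|^{1+\rho}}\le\frac{C}{\max(1,\operatorname{sgn}(X)|X|^{\rho})},
\]
valid for all real $X$ with a constant $C$ depending only on $\rho$ (and on $d$ once the $\max(0,|s|-d)$ variant is used for $\phi'$). For $X\ge0$ the left integral is dominated by $\int_{-\infty}^{+\infty}(1+|s|^{1+\rho})^{-1}\,\d s<\infty$ while the right-hand side is $C$; for $X<0$ one substitutes $s\mapsto-s$ and bounds $\int_{|X|}^{\infty}r^{-1-\rho}\,\d r=\rho^{-1}|X|^{-\rho}$, treating $|X|\le1$ separately. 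The second inequality is the mirror image of the first.

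Finally I would parametrise $\mathscr{P}(\mp\infty,\xi)$ by $t=s+\im\Im\xi$, so that $|t|\ge|s|$ and the integral over $\mathscr{P}(-\infty,\xi)$ (respectively $\mathscr{P}(+\infty,\xi)$) becomes $\int_{-\infty}^{\Re\xi}$ (respectively $\int_{\Re\xi}^{+\infty}$). Combining the pointwise bounds of the first step with the one-dimensional estimate evaluated at $X=\Re\xi$ yields each of the three inequalities in \eqref{intbound1} with the stated denominators $\max(1,\mp\operatorname{sgn}(\Re\xi)|\Re\xi|^\rho)$ — for the upper sign the contour runs leftward and produces $\max(1,-\operatorname{sgn}(\Re\xi)|\Re\xi|^\rho)$, for the lower sign it runs rightward and produces $\max(1,\operatorname{sgn}(\Re\xi)|\Re\xi|^\rho)$, exactly as required — with the factor $1/d$ in the second inequality coming from the Cauchy estimate for $\phi'$ (under Condition \ref{condv2} it can simply be absorbed into the constant). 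Taking $c_2$ to be the largest of the three resulting constants completes the proof.
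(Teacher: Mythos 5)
Your proposal is correct and follows essentially the same route as the paper's proof: reduce to pointwise $(1+|t|)^{-(1+\rho)}$ bounds on $\phi^2$, $\phi'$, $\psi$ (with the Cauchy estimate on a circle of radius $d$ supplying the missing bound on $\phi'$ under Condition \ref{cond1}), parametrise the horizontal contour, and invoke the elementary one-dimensional tail estimate split according to the sign of $\Re\xi$. Your explicit remark that the pointwise bound from Lemma \ref{lemma1} decays only like $|\Re\xi|^{-\rho}$ and so is not integrable is a useful clarification of a point the paper handles only implicitly (by deriving \eqref{phiprimebound} independently of Lemma \ref{lemma1}), but it does not constitute a different argument.
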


\begin{proof} We shall show that Lemma \ref{lemma1} holds with $c_2 = 4\max (c,c^2 )(1 + \rho )(1 + (1 + d)^{1 + \rho } )/\rho$. Suppose that $\xi \in \Gamma^{\pm}(d)$. By Condition \ref{cond1}, $|\phi(t)|\leq c$ whenever $t\in\Gamma^{\pm}(d) \subset \Delta$. Therefore, we have the following series of estimates:
\begin{gather}\label{intbound}
\begin{split}
\int_{\mathscr{P}( \mp \infty ,\xi )} \left| \phi ^2 (t)\d t\right|   & \le c \int_{\mathscr{P}( \mp \infty ,\xi )} \left| \phi (t) \d t\right|  = \pm c\int_{ \mp \infty }^{\Re \xi } \left| \phi (s + \im \Im \xi ) \right|\d s \\ & \le  \pm c^2\int_{ \mp \infty }^{\Re \xi } \frac{\d s}{1 + (s^2  + (\Im \xi )^2 )^{(1 + \rho)/2} }  \le  \pm c^2\int_{ \mp \infty }^{\Re \xi } \frac{\d s}{1 + \left| s \right|^{1 + \rho } }.
\end{split}
\end{gather}
If, instead, Condition \ref{condv2} holds, then
\begin{gather}\label{intbound3}
\begin{split}
& \int_{\mathscr{P}( \mp \infty ,\xi )} \left| \phi ^2 (t) \d t\right|  = \pm\int_{ \mp \infty }^{\Re \xi } \left| \phi (s + \im \Im \xi ) \right|^2 \d s  \le  \pm c^2 \int_{ \mp \infty }^{\Re \xi } \frac{\d s}{\left( 1 + (s^2  + (\Im \xi )^2 )^{(1/2 + \rho )/2} \right)^2 }
\\ & \le  \pm c^2 \int_{ \mp \infty }^{\Re \xi } \frac{\d s}{1 + (s^2  + (\Im \xi )^2 )^{1/2 + \rho } }  \le  \pm c^2 \int_{ \mp \infty }^{\Re \xi } \frac{\d s}{1 + \left| s \right|^{1 + 2\rho } } \le  \pm 2c^2 \int_{ \mp \infty }^{\Re \xi } \frac{\d s}{1 + \left| s \right|^{1 + \rho } } .
\end{split}
\end{gather}
Now, when $\mp \Re \xi  > 1$,
\[
 \pm \int_{ \mp \infty }^{\Re \xi } \frac{\d s}{1 + \left| s \right|^{1 + \rho } }  \le  \pm \int_{ \mp \infty }^{\Re \xi } \frac{\d s}{( \mp s)^{1 + \rho } }  = \frac{1}{\rho ( \mp \Re \xi )^\rho  } \leq \frac{2(1 + \rho )}{\rho ( \mp \Re \xi )^\rho  }
\]
and when $\mp \Re \xi  \le 1$,
\[
 \pm \int_{ \mp \infty }^{\Re \xi } \frac{\d s}{1 + \left| s \right|^{1 + \rho } }  \le \int_{ - \infty }^{ + \infty } \frac{\d s}{1 + \left| s \right|^{1 + \rho } }  = 2\int_0^{ + \infty } \frac{\d s}{1 + s^{1 + \rho } }  = 2\Gamma\! \left( \frac{\rho}{1 + \rho } \right)\Gamma\! \left( \frac{2+ \rho}{1 + \rho } \right) \le \frac{2(1 + \rho )}{\rho}.
\]
These two estimates, together with \eqref{intbound} or \eqref{intbound3} and the choice of $c_2$, imply the first inequality in \eqref{intbound1}. The third estimate in \eqref{intbound1} can be obtained in a similar way by employing the inequality
\[
\left| \psi (t) \right| \le \frac{c}{1+|t|^{1+\rho}},
\]
which is valid for $t\in\Gamma^{\pm}(d) \subset \Delta$. To prove the second inequality in \eqref{intbound1}, we can proceed as follows. 
Using Condition \ref{cond1} and Cauchy's formula, we derive
\begin{equation}\label{dervbound2}
\left| \phi' (\xi ) \right| = \left| \frac{1}{2\pi \im}\oint_{\left| t - \xi  \right| = d} \frac{\phi (t)}{(t - \xi )^2}\d t \right| \le \frac{c}{2\pi }\frac{1}{d^2 }\oint_{\left| t - \xi  \right| = d} \frac{|\d t|}{1 + \left| t \right|^{1+\rho} } .
\end{equation}
Now, when $\left| \xi  \right| \le 1 + d$,
\[
1 + \left| t \right|^{1 + \rho }  \ge 1 \ge \frac{1}{1 + (1 + d)^{1 + \rho }}(1 + \left| \xi  \right|^{1 + \rho } )
\]
and when $\left| \xi  \right| \ge 1 + d$,
\[
1 + \left| t \right|^{1 + \rho }  \ge 1 + \left| \left| \xi  \right| - d\right|^{1 + \rho }  \ge 1 + \frac{1}{(1 + d)^{1 + \rho }}\left| \xi  \right|^{1 + \rho }  \ge \frac{1}{1 + (1 + d)^{1 + \rho }}(1 + \left| \xi  \right|^{1 + \rho } ).
\]
Hence, from \eqref{dervbound2}, we can infer that
\begin{equation}\label{phiprimebound}
\left| \phi' (\xi ) \right| \le \frac{c(1 + (1 + d)^{1 + \rho } )}{1 + \left| \xi  \right|^{1 + \rho } }\frac{1}{d}
\end{equation}
for any $\xi \in \Gamma^{\pm}(d)$. Consequently,
\begin{align*}
\int_{\mathscr{P}( \mp \infty ,\xi )} \left| \phi '(t)\d t\right|  & \le \pm c(1 + (1 + d)^{1 + \rho } )\frac{1}{d}\int_{ \mp \infty }^{\Re \xi } \frac{\d s}{1 + (s^2  + (\Im \xi )^2 )^{(1 + \rho )/2} } \\ & \le \frac{2(1 + \rho )c(1 + (1 + d)^{1 + \rho } )}{\rho \max (1, \mp\operatorname{\rm sgn} (\Re \xi )\left| \Re \xi \right|^\rho )}\frac{1}{d}.
\end{align*}
With our choice of the constant $c_2$, this bound implies the second inequality in \eqref{intbound1}. If, instead, Condition \ref{condv2} is assumed, then
\begin{align*}
\int_{\mathscr{P}( \mp \infty ,\xi )} \left| \phi '(t)\d t\right|  & \le  \pm c\int_{ \mp \infty }^{\Re \xi } \frac{\d s}{1 + (s^2  + (\Im \xi )^2 )^{(1 + \rho )/2} }
\\ & \le \frac{2(1 + \rho )c}{\rho \max (1, \mp\operatorname{\rm sgn} (\Re \xi )\left| \Re \xi \right|^\rho )} \le \frac{2(1 + \rho )c(1 + (1 + d)^{1 + \rho } )}{\rho \max (1, \mp\operatorname{\rm sgn} (\Re \xi )\left| \Re \xi \right|^\rho )}\frac{1}{d}.
\end{align*}
By the choice of $c_2$, this bound implies the second inequality in \eqref{intbound1}.
\end{proof}

\begin{lemma}\label{lemma3} If $n$ and $m$ are integers such that $n>0$ and $m\geq 0$, then
\[
\sum\limits_{j = 0}^m \binom{m}{j} j!(n + m - j - 1)!  = \frac{(n + m)!}{n}.
\]
\end{lemma}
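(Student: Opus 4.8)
The plan is to collapse the sum to a single binomial coefficient via a change of the summation index, and then apply the "hockey-stick" summation formula for binomial coefficients. First I would use $\binom{m}{j}j! = m!/(m-j)!$ to rewrite
\[
\sum_{j = 0}^m \binom{m}{j} j!(n + m - j - 1)! = m!\sum_{j=0}^m \frac{(n+m-j-1)!}{(m-j)!}.
\]
Substituting $i = m-j$ (so $i$ runs from $0$ to $m$) turns the right-hand side into $m!\sum_{i=0}^m (n+i-1)!/i! = m!\,(n-1)!\sum_{i=0}^m \binom{n-1+i}{i}$.

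The key step is then the identity $\sum_{i=0}^m \binom{n-1+i}{i} = \binom{n+m}{m}$, which I would either quote as the standard upper-summation (hockey-stick) identity or prove in two lines by induction on $m$: the base case $m=0$ is $\binom{n-1}{0}=1=\binom{n}{0}$, and the inductive step is $\binom{n+m}{m}+\binom{n+m}{m+1}=\binom{n+m+1}{m+1}$ by Pascal's rule. Substituting this back yields
\[
m!\,(n-1)!\binom{n+m}{m} = m!\,(n-1)!\,\frac{(n+m)!}{m!\,n!} = \frac{(n-1)!\,(n+m)!}{n!} = \frac{(n+m)!}{n},
\]
which is the asserted identity.

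A slightly slicker alternative avoids the hockey-stick identity altogether by using the Beta integral $j!(n+m-j-1)!/(n+m)! = \int_0^1 t^j (1-t)^{n+m-j-1}\,\d t$, valid here because $0 \le j \le m \le n+m-1$. Multiplying by $\binom{m}{j}$, summing over $j$, and factoring out $(1-t)^{n-1}$ leaves $\sum_{j=0}^m \binom{m}{j} t^j (1-t)^{m-j} = (t + (1-t))^m = 1$ by the binomial theorem, so the whole sum equals $(n+m)!\int_0^1 (1-t)^{n-1}\,\d t = (n+m)!/n$.

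I do not anticipate any genuine obstacle: both arguments are short and elementary. The only point requiring a moment's care is the boundary case $n = 1$, where $j$ may reach $m = n+m-1$ so that the factorial $(n+m-j-1)!$ collapses to $0!$ (equivalently, the exponent $n+m-j-1$ in the Beta integrand vanishes); in either route this is harmless, and it is worth noting explicitly that the index manipulations and the integral representation remain valid in that case.
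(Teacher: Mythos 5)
Both of your arguments are correct, and both differ in packaging from the paper's proof, which after the same first step $\binom{m}{j}j! = m!/(m-j)!$ telescopes directly by writing
\[
\frac{(n+m-j-1)!}{(m-j)!} = \frac{1}{n}\left(\frac{(n+m-j)!}{(m-j)!} - \frac{(n+m-j-1)!}{(m-j-1)!}\right)
\]
and summing (with the convention $1/(-1)! = 0$). Your first argument is a repackaging of the same underlying mechanism: after the index shift $i = m-j$ and factoring out $(n-1)!$, you invoke the hockey-stick identity $\sum_{i=0}^m\binom{n-1+i}{i}=\binom{n+m}{m}$, whose inductive proof via Pascal's rule is exactly a telescoping of binomial coefficients, so the two proofs are isomorphic at the level of the key cancellation but the paper avoids naming a standard identity while you reduce to one. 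Your second argument via the Beta integral $j!(n+m-j-1)!/(n+m)! = \int_0^1 t^j(1-t)^{n+m-j-1}\,\d t$ and the binomial theorem is genuinely different and arguably more illuminating — the identity becomes transparent once the sum collapses to $(t+(1-t))^m = 1$ under the integral sign — and it is also pleasantly consistent with the paper's later use of the Beta integral in Section 5. Either version is a valid substitute for the paper's proof; the paper's telescoping has the merit of being entirely self-contained and arithmetical, requiring no named identity and no integral.
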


\begin{proof} The identity can be proved by re-writing the left-hand side as a telescoping sum (with the convention that $1/( - 1)! = 0$):
\begin{align*}
\sum\limits_{j = 0}^m \binom{m}{j}j!(n + m - j - 1)! & = m!\sum\limits_{j = 0}^m \frac{(n + m - j - 1)!}{(m - j)!}
\\ & = \frac{m!}{n}\sum\limits_{j = 0}^m \left( \frac{(n + m - j)!}{(m - j)!} - \frac{(n + m - j - 1)!}{(m - j - 1)!} \right)  = \frac{(n + m)!}{n}.
\end{align*}
\end{proof}

\begin{proof}[Proof of Theorem \ref{pbs}] We begin by proving that for $\xi \in \Gamma^\pm (d)$, $n\geq 1$ and $m\geq 0$, it holds that
\begin{equation}\label{Abound}
\left| \frac{\d^m \mathsf{A}_n^ \pm  (\xi )}{\d\xi ^m } \right| \le c_3 C_n (d)\frac{1}{2^n}\frac{(n + m - 1)!}{d^{n + m - 1} \max (1, \mp \operatorname{sgn} (\Re \xi )\left| \Re \xi \right|^\rho  )}
\end{equation}
where
\begin{equation}\label{Abound2}
c_3 = 1+ \frac{7}{4}c_2,\quad C_1 (d) = 1+\frac{1}{2}c_1  + \frac{5}{4}c_1 d
\end{equation}
and
\begin{equation}\label{Abound3}
C_{n + 1} (d) = \left( 1 + \frac{c_2}{2}\frac{1}{n} + \left( c_1  + \frac{9}{4}c_2  \right)\frac{d}{n} + \frac{7}{4}c_1 \frac{d^2 }{n^2 } \right)C_n (d)
\end{equation}
for $n\geq 1$. We proceed via induction on $n$. By definition, 
\begin{equation}\label{A1}
\mathsf{A}^\pm _1 (\xi ) =  - \frac{1}{4}\phi (\xi ) \mp \frac{1}{8}\int_{\mathscr{P}( \mp \infty ,\xi )} \phi ^2 (t)\d t  \pm \frac{1}{2}\int_{\mathscr{P}( \mp \infty ,\xi )} \psi (t)\d t .
\end{equation}
Employing Lemmas \ref{lemma1} and \ref{lemma2}, together with the definitions of $c_3$ and $C_1(d)$, we deduce
\[
\left| \mathsf{A}_1^ \pm  (\xi ) \right| \le  \frac{7}{4}c_2 \frac{1}{2}\frac{1}{\max (1, \mp \operatorname{sgn} (\Re \xi )\left| \Re \xi \right|^\rho  )} \le c_3C_1 (d)\frac{1}{2}\frac{1}{\max (1, \mp \operatorname{sgn} (\Re \xi )\left| \Re \xi \right|^\rho  )}.
\]
Now let $m$ be an arbitrary positive integer. Differentiating \eqref{A1} $m$ times yields
\[
\frac{\d^m \mathsf{A}_1^ \pm  (\xi )}{\d\xi ^m }  =  - \frac{1}{4}\phi ^{(m)} (\xi ) \mp \frac{1}{8}\left( \phi ^2 (\xi ) \right)^{(m - 1)} \pm \frac{1}{2}\left( \psi (\xi ) \right)^{(m - 1)} .
\]
By an application of Lemma \ref{lemma1}, and the definitions of $c_3$ and $C_1(d)$, we obtain
\begin{align*}
\left| \frac{\d^m \mathsf{A}_1^ \pm  (\xi )}{\d\xi ^m }  \right| & \le \left( \frac{1}{2}c_1  + \frac{1}{4}c_1 \frac{d}{m} + c_1 \frac{d}{m} \right)\frac{1}{2}\frac{m!}{d^m \max (1, \mp \operatorname{sgn} (\Re \xi )\left| \Re \xi \right|^\rho  )} \\ & \le \left( \frac{1}{2}c_1  + \frac{5}{4}c_1 d \right)\frac{1}{2}\frac{m!}{d^m \max (1, \mp \operatorname{sgn} (\Re \xi )\left|\Re \xi \right|^\rho  )}
\\ & \le c_3C_1 (d)\frac{1}{2}\frac{m!}{d^m \max (1, \mp \operatorname{sgn} (\Re \xi )\left| \Re \xi \right|^\rho  )}.
\end{align*}
Assume that \eqref{Abound} holds for all $\d^m \mathsf{A}_k^ \pm  (\xi ) / \d\xi ^m $ with $1 \leq k \leq n$ and $m\geq 0$. By definition, 
\begin{gather}\label{An}
\begin{split}
\mathsf{A}^\pm_{n + 1} (\xi ) = & - \frac{1}{2}\phi (\xi )\mathsf{A}^\pm_n (\xi ) \mp \frac{1}{2}\frac{\d \mathsf{A}_n^ \pm  (\xi )}{\d\xi } \mp \frac{1}{8}\int_{\mathscr{P}( \mp \infty ,\xi )} \phi ^2 (t)\mathsf{A}^\pm_n (t)\d t \\ &   + \frac{1}{4}\int_{\mathscr{P}( \mp \infty ,\xi )} \phi '(t)\mathsf{A}^\pm_n (t)\d t \pm \frac{1}{2}\int_{\mathscr{P}( \mp \infty ,\xi )} \psi (t)\mathsf{A}^\pm_n (t)\d t.
\end{split}
\end{gather}
Using the induction hypothesis, Lemmas \ref{lemma1} and \ref{lemma2}, and the definition of $C_{n+1}(d)$, we deduce
\begin{align*}
\left| \mathsf{A}^\pm_{n + 1} (\xi ) \right| & \le c_3\left( 1 + \frac{c_2}{2}\frac{1}{n} + \frac{9}{4}c_2 \frac{d}{n} \right)C_n (d)\frac{1}{2^{n + 1} }\frac{n!}{d^n \max (1, \mp \operatorname{sgn} (\Re \xi )\left| \Re \xi \right|^\rho  )}
\\ & \le c_3C_{n + 1} (d)\frac{1}{2^{n + 1} }\frac{n!}{d^n \max (1, \mp \operatorname{sgn} (\Re \xi )\left| \Re \xi \right|^\rho  )} .
\end{align*}
Now let $m$ be an arbitrary positive integer. Differentiating \eqref{An} $m$ times gives
\begin{align*}
\frac{\d^m \mathsf{A}_{n+1}^ \pm  (\xi )}{\d\xi ^m } = & - \frac{1}{2}\sum\limits_{j = 0}^m \binom{m}{j}\phi ^{(j)} (\xi )\frac{\d^{m - j} \mathsf{A}_n^ \pm  (\xi )}{\d \xi ^{m - j} } \mp \frac{1}{2}\frac{\d^{m +1} \mathsf{A}_n^ \pm  (\xi )}{\d \xi ^{m +1} } \\ & \mp \frac{1}{8}\sum\limits_{j = 0}^{m - 1} \binom{m-1}{j}(\phi ^2 (\xi ))^{(j)} \frac{\d^{m - j-1} \mathsf{A}_n^ \pm  (\xi )}{\d \xi ^{m - j-1} }
\\ & + \frac{1}{4}\sum\limits_{j = 0}^{m - 1} \binom{m-1}{j}\phi ^{(j)} (\xi )\frac{\d^{m - j-1} \mathsf{A}_n^ \pm  (\xi )}{\d \xi ^{m - j-1} } \pm \frac{1}{2}\sum\limits_{j = 0}^{m - 1} \binom{m-1}{j}\psi^{(j)} (\xi )\frac{\d^{m - j-1} \mathsf{A}_n^ \pm  (\xi )}{\d \xi ^{m - j-1} }  .
\end{align*}
Using the induction hypothesis, Lemmas \ref{lemma1} and \ref{lemma3}, and the definition of $C_{n+1}(d)$, we obtain
\begin{align*}
\left| \frac{\d^m \mathsf{A}_{n+1}^ \pm  (\xi )}{\d\xi ^m } \right| 
& \le c_3 \left( 1 + c_1 \frac{d}{n} + \frac{7}{4}c_1 \frac{d^2 }{n(n + m)} \right)C_n (d)\frac{1}{2^{n + 1}}\frac{(n + m)!}{d^{n + m} \max (1, \mp \operatorname{sgn} (\Re \xi )\left| \Re \xi \right|^\rho  )}
\\ & \le c_3 \left( 1 + c_1 \frac{d}{n} + \frac{7}{4}c_1 \frac{d^2 }{n^2 } \right)C_n (d)\frac{1}{2^{n + 1}}\frac{(n + m)!}{d^{n + m} \max (1, \mp \operatorname{sgn} (\Re \xi )\left| {\Re \xi } \right|^\rho  )}
\\ & \le c_3 C_{n + 1} (d)\frac{1}{2^{n + 1}}\frac{(n + m)!}{d^{n + m} \max (1, \mp \operatorname{sgn} (\Re \xi )\left| \Re \xi \right|^\rho  )}.
\end{align*}
This completes the proof of the bound \eqref{Abound}.

It is straightforward to show that $C_n (d)$ grows at most polynomially in $n$ (see Section \ref{Section4}). Hence, we obtain from \eqref{Abound} that
\[
\mathop {\lim \sup }\limits_{n \to  + \infty } \left| \frac{\mathsf{A}_{n + 1}^ \pm  (\xi )}{n!} \right|^{1/n}  \le \mathop {\lim \sup }\limits_{n \to  + \infty } \frac{1}{2d}\left( \frac{c_3}{2}C_{n + 1} (d) \right)^{1/n} =\mathop {\lim }\limits_{n \to  + \infty } \frac{1}{2d}\left( \frac{c_3}{2}C_{n + 1} (d) \right)^{1/n}  = \frac{1}{2d}
\]
for any $\xi \in \Gamma^\pm (d)$. Therefore, by the Cauchy--Hadamard theorem, the power series \eqref{gpowser} are convergent when $t\in B(2d)$, and define analytic functions $G^\pm (t,\xi )$ on $B(2d)$ (for each fixed $\xi \in \Gamma^\pm(d)$). From \eqref{Abound} we can also infer that the series \eqref{gpowser} converge uniformly on $\Gamma^\pm(d)$ and hence the functions $G^\pm (t,\xi )$ are analytic with respect to $\xi$ for each fixed $t$. Since the functions $G^\pm(t,\xi)$ are continuous in $(t,\xi)$ and are analytic in
each of their variables, they are analytic functions in $B(2d) \times \Gamma^\pm(d)$.

The estimate \eqref{Abound} also implies that, when multiplied by $\max (1, \mp\operatorname{sgn} (\Re \xi )\left| \Re \xi \right|^\rho  )$, the functions $G^\pm  (t,\xi )$ and their partial derivatives are bounded on $B(2r) \times \Gamma^\pm (d)$ for each $0<r<d$.
\end{proof}

\section{Borel summability of the WKB solutions}\label{Section3}

In this section, we shall show that the functions $\eta^\pm (u,\xi )$ can be represented as Laplace transforms of some associated functions $F^\pm (t,\xi )$ that are analytic in $U(2d) \times \Gamma^\pm(d)$. These associated functions will coincide with $G^\pm (t,\xi )$ of Theorem \ref{pbs} in their common domains of definition. The proof follows closely that in \cite{Dunster1993}: we make a Laplace ``ansatz'' to derive a pair of partial differential equations for $F^\pm (t,\xi )$ with certain boundary conditions. Thus, we seek $F^\pm (t,\xi )$ such that
\[
\eta^\pm (u,\xi )  = \int_0^{ + \infty } \e^{ - ut} F^\pm (t,\xi )\d t,
\]
where $\Re u>0$ and $\xi \in \Gamma^{\pm}(d)$. By partial integration one easily finds that if $F^\pm (t,\xi )$ satisfy
\begin{equation}\label{feq}
 F^\pm_{\xi \xi } (t,\xi )  \pm 2F^\pm_{\xi t} (t,\xi ) = \mp \phi (\xi )F^\pm_\xi  (t,\xi ) + \left(   - \frac{1}{4}\phi ^2 (\xi ) \mp \frac{1}{2}\phi '(\xi ) + \psi (\xi ) \right)F^\pm(t,\xi )
\end{equation}
and the conditions
\begin{equation}\label{boundary}
\pm2F^\pm_\xi  (0,\xi ) = \pm2\frac{\d\mathsf{A}^\pm _1 (\xi )}{\d \xi} =   - \frac{1}{4}\phi^2 (\xi ) \mp \frac{1}{2}\phi'  (\xi ) + \psi (\xi ),
\end{equation}
then $\eta ^ \pm  (u,\xi )$ satisfy
\[
\eta _{\xi \xi }^ \pm  (u,\xi ) \pm (2u + \phi (\xi ))\eta _\xi ^ \pm  (u,\xi ) = \left( - \frac{1}{4}\phi ^2 (\xi ) \mp \frac{1}{2}\phi '(\xi ) + \psi (\xi ) \right)\left( 1 + \eta ^ \pm  (u,\xi ) \right),
\]
and hence \eqref{WKBsol} satisfy the equation \eqref{Eq}. We seek solutions of \eqref{feq} in $U(2d) \times \Gamma^\pm(d)$ such that
\begin{equation}\label{flimit}
\mathop {\lim }\limits_{\Re \xi  \to  \mp \infty } F^\pm  (t,\xi ) = 0
\end{equation}
for all $t \in U(2d)$. This is consistent with the requirement \eqref{limitreq}. Using \eqref{Arec} and Theorem \ref{pbs}, one can show that $G^\pm(t,\xi)$ are solutions of \eqref{feq} on $B(2d) \times \Gamma^\pm  (d)$. In order to show that $G^{\pm}(t,\xi)$ can be continued analytically to the whole of $U(2d) \times \Gamma^\pm(d)$ and are of exponential type in $t$ as $\Re t\to +\infty$ in $U(2d)$, we will transform \eqref{feq} into an integral equation. We define temporarily
\[
\widetilde F^\pm(\tau,w) = F^\pm  \left( \tau,w \pm \tfrac{1}{2}\tau \right).
\]
Then $F^\pm$ satisfy \eqref{feq}, \eqref{boundary} and \eqref{flimit} if and only if $\widetilde F^\pm$ satisfy
\begin{gather}\label{feq2}
\begin{split}
 & \pm 2\widetilde F^\pm_{w\tau} (\tau,w) =  \mp\phi \left( w \pm \tfrac{1}{2}\tau \right)\widetilde F^\pm_w (\tau,w) 
   \\ & + \left(  - \frac{1}{4} \phi ^2 \left( w\pm \tfrac{1}{2}\tau \right) \mp\frac{1}{2}\phi '\left( w \pm \tfrac{1}{2}\tau \right)+ \psi \left( w \pm \tfrac{1}{2}\tau \right) \right)\widetilde F^\pm(\tau,w) 
\end{split}
\end{gather}
and
\begin{equation}\label{initial}
 \pm 2\widetilde F^\pm_w (0,w) = - \frac{1}{4}\phi ^2 (w) \mp \frac{1}{2}\phi '(w) + \psi (w), \quad \mathop {\lim }\limits_{\Re w \to  \mp \infty } \widetilde F^\pm(\tau,w) = 0.
\end{equation}
Now, we integrate the equation \eqref{feq2} in $\tau$ from $\zeta$ to $t$ and in $w$ from $\xi\mp\frac{1}{2}\zeta$ to $\mp\infty$. Applying the limit conditions \eqref{initial} and integrating once by parts, we obtain
\begin{gather}\label{initialinteq}
\begin{split}
& F^ \pm  \left( t,\xi  \mp \tfrac{1}{2}(\zeta  - t) \right) - F^ \pm  (\zeta ,\xi ) =  - \frac{1}{2}\int_\zeta ^t \phi \left( \xi  \mp \tfrac{1}{2}(\zeta  - \tau ) \right)F^\pm  \left( \tau ,\xi  \mp \tfrac{1}{2}(\zeta  - \tau ) \right)\d\tau 
\\ & \mp \frac{1}{2}\int_{\mathscr{P}\left( \xi  \mp \frac{1}{2}\zeta , \mp \infty \right)} \int_\zeta ^t \left( - \frac{1}{4}\phi ^2 \left( w \pm \tfrac{1}{2}\tau \right) \pm \frac{1}{2}\phi '\left( w \pm \tfrac{1}{2}\tau \right) + \psi \left( w \pm \tfrac{1}{2}\tau \right) \right)F^\pm  \left( \tau ,w \pm \tfrac{1}{2}\tau \right)\d\tau \d w .
\end{split}
\end{gather}
Here $\zeta \in B(2d)$, $t\in U(2d)$, and $\xi, \xi  \mp \tfrac{1}{2}(\zeta - t) \in \Gamma^\pm (d)$.

In what follows, we discuss the construction of $F^-$, an analogous argument can be used to construct $F^+$. It is not easy to use the integral equation \eqref{initialinteq} and the initial condition in \eqref{initial} directly to prove the existence of a solution of \eqref{feq} (satisfying \eqref{flimit}). Since we would like our solution to coincide with the solution $G^-(t,\xi)$ defined in Theorem \ref{pbs} when $t\in B(2d)$, and the only restriction on $\xi$ is $\xi \in \Gamma^-(d)$, we express $t$ as $x+\zeta$, where $x \geq 0$, replace $\xi$ by $\xi+\frac{1}{2} x$, and $F^-  \left(\zeta,\xi  + \tfrac{1}{2}x\right)$ by $G^-  \left(\zeta,\xi  + \tfrac{1}{2}x\right)$ to obtain the following linear integral equation
\begin{gather}\label{finalinteq}
\begin{split}
& F^-  (t,\xi ) = G^-  \left( \zeta,\xi  + \tfrac{1}{2}x \right) - \frac{1}{2}\int_0^x \phi \left( \xi  + \tfrac{1}{2}(x - \tau ) \right)F^-  \left( \tau  + \zeta,\xi  + \tfrac{1}{2}(x - \tau ) \right)\d\tau  
\\ & + \frac{1}{2}\int_{\mathscr{P}\left(\xi  + \frac{1}{2}x, + \infty \right)}  \int_0^x \left(   - \frac{1}{4}\phi ^2 \left( w - \tfrac{1}{2}\tau \right) - \frac{1}{2}\phi '\left( w - \tfrac{1}{2}\tau \right) + \psi \left( w - \tfrac{1}{2}\tau \right) \right)F^-\left( \tau  + \zeta,w - \tfrac{1}{2}\tau \right) \d\tau  \d w .
\end{split}
\end{gather}
Note that if $t\in B(2d)$, then $G^-(t,\xi)$ is a solution of \eqref{finalinteq}. Also observe from the definition of $\Gamma^-(d)$ that $\xi \in \Gamma^-(d)$ implies $\xi +\frac{1}{2}x \in \Gamma^-(d)$ for all $x \geq 0$. Equation \eqref{finalinteq} will eventually be used to define $F^-(t,\xi)$.

Let $\sigma$ be an arbitrary (fixed) positive number. Denote by $\mathscr{B}_\sigma$ the complex vector space of continuous functions $H(x,\xi)$ on $\mathbb{R}_{\geq 0} \times \Gamma^-(d)$ such that for each $H \in \mathscr{B}_\sigma$ there exist a constant $K$ (depending only on $\sigma$) such that
\begin{equation}\label{hineq}
\left| H(x,\xi ) \right| \le K\frac{\e^{V^{-}(\sigma ,\xi )} \e^{\sigma x} }{\max (1,\operatorname{sgn} (\Re \xi )\left| \Re \xi \right|^\rho  )},
\end{equation}
where $V^{-}(\sigma ,\xi )$ is defined in \eqref{Gdef}. Let us define the norm $\left\|H\right\|$ of $H(x,\xi)$ to be the infimum of all such $K$ for which the inequality \eqref{hineq} holds. Equipped with this norm, $\mathscr{B}_\sigma$ becomes a Banach space. In the following, we shall use the facts that for fixed $\xi \in \Gamma^-(d)$, $s \mapsto V^{-}(\sigma ,\xi +s)$ is a monotonically decreasing function, $\mathop {\lim }_{s \to  + \infty } V^{-}(\sigma ,\xi  + s) = 0$ and
\[
\frac{\d}{\d s}V^{-}(\sigma ,\xi  + s) =  - \frac{6}{\sigma}\left( \frac{1}{4}\left| \phi ^2 (\xi  + s) \right| + \left| \frac{1}{2} \phi '(\xi  + s) - \psi (\xi  + s) \right| \right).
\]
(We will assume that $\phi(t) \not\equiv 0$. The case of $\phi(t) \equiv 0$ can be handled in a similar manner.)

Consider the operator
\begin{align*}
& \mathcal{L}H(x,\xi) =  - \frac{1}{2}\int_0^x \phi \left( \xi  + \tfrac{1}{2}(x - \tau ) \right)H\left( \tau ,\xi  + \tfrac{1}{2}(x - \tau ) \right)\d\tau  
\\ & + \frac{1}{2}\int_{\mathscr{P}\left(\xi  + \frac{1}{2}x, + \infty \right)}  \int_0^x \left(   - \frac{1}{4}\phi ^2 \left( w - \tfrac{1}{2}\tau \right) - \frac{1}{2}\phi '\left( w - \tfrac{1}{2}\tau \right) + \psi \left( w - \tfrac{1}{2}\tau \right) \right)H\left( \tau ,w - \tfrac{1}{2}\tau \right) \d\tau  \d w ,
\end{align*}
acting on the space $\mathscr{B}_\sigma$. By the Cauchy--Schwarz inequality, we can assert that
\begin{align*}
& \left( \int_0^x \left| \phi \left( \xi  + \tfrac{1}{2}(x - \tau ) \right)H\left( \tau ,\xi  + \tfrac{1}{2}(x - \tau ) \right) \right|\d\tau  \right)^2 
\\ & \le \int_0^x \left| \phi ^2 \left( \xi  + \tfrac{1}{2}(x - \tau ) \right)\right|\left| H\left( \tau ,\xi  + \tfrac{1}{2}(x - \tau ) \right) \right|\d\tau \int_0^x \left| H\left( \tau ,\xi  + \tfrac{1}{2}(x - \tau ) \right) \right|\d\tau 
\\ & \le \int_0^x \left| \phi ^2 \left( \xi  + \tfrac{1}{2}(x - \tau ) \right) \right|\left\| H \right\|\frac{\exp \left( V^{-}\left(\sigma , \xi  + \frac{1}{2}(x - \tau ) \right) \right)\e^{\sigma \tau } }{\max \left( 1,\operatorname{sgn}\left( \Re \xi  + \frac{1}{2}(x - \tau ) \right)\left| \Re \xi  + \frac{1}{2}(x - \tau ) \right|^\rho   \right)}\d\tau \\ & \quad \times \int_0^x \left\| H \right\|\frac{\exp \left( V^{-}\left(\sigma , \xi  + \frac{1}{2}(x - \tau ) \right) \right)\e^{\sigma \tau } }{\max \left( 1,\operatorname{sgn} \left( \Re \xi  + \frac{1}{2}(x - \tau ) \right)\left| \Re \xi  + \frac{1}{2}(x - \tau ) \right|^\rho  \right)}\d\tau  
\\ & \le \frac{2}{3}\left\| H \right\|^2 \frac{\e^{V^{-}(\sigma ,\xi )} \e^{2\sigma x} }{(\max (1,\operatorname{sgn} (\Re \xi )\left| \Re \xi \right|^\rho  ))^2 }\int_0^x \frac{\d}{\d\tau}\exp \left( V^{-}\left( \sigma ,\xi  + \tfrac{1}{2}(x - \tau ) \right)\right)\d\tau 
\\ & \le \frac{2}{3}\left\| H \right\|^2 \frac{\e^{2V^{-}(\sigma ,\xi )} \e^{2\sigma x} }{(\max (1,\operatorname{sgn}(\Re \xi )\left| \Re \xi \right|^\rho  ))^2 }.
\end{align*}
Consequently,
\[
\left| \frac{1}{2}\int_0^x \phi \left( \xi  + \tfrac{1}{2}(x - \tau ) \right)H\left( \tau ,\xi  + \tfrac{1}{2}(x - \tau ) \right)\d\tau  \right| \le \frac{1}{\sqrt 6}\left\| H \right\|\frac{\e^{V^{-}(\sigma ,\xi )} \e^{\sigma x} }{\max (1,\operatorname{sgn}(\Re \xi )\left| \Re \xi  \right|^\rho  )}.
\]
We also have
\begin{align*}
& \left|\frac{1}{2}\int_{\mathscr{P}\left(\xi  + \frac{1}{2}x, + \infty \right)}  \int_0^x \left(   - \frac{1}{4}\phi ^2 \left( w - \tfrac{1}{2}\tau \right) - \frac{1}{2}\phi '\left( w - \tfrac{1}{2}\tau \right) + \psi \left( w - \tfrac{1}{2}\tau \right) \right)H\left( \tau ,w - \tfrac{1}{2}\tau \right) \d\tau  \d w \right|
\\ & \le \frac{1}{2}\int_{\mathscr{P}\left(\xi  + \frac{1}{2}x, + \infty \right)}  \int_0^x \left(  \frac{1}{4}\left| \phi ^2 \left( w - \tfrac{1}{2}\tau \right)\right| + \left|\frac{1}{2} \phi '\left( w - \tfrac{1}{2}\tau \right) - \psi \left( w - \tfrac{1}{2}\tau \right) \right| \right)    \\ & \quad \times \left\| H \right\|\frac{\exp \left( V^{-}\left( \sigma ,w  - \frac{1}{2}\tau \right)\right)\e^{\sigma \tau } }{\max \left( 1,\operatorname{sgn}\left( \Re w - \frac{1}{2}\tau \right)\left| \Re w  - \frac{1}{2}\tau  \right|^\rho  \right)}\d\tau  \d w  
\\ & \le  - \frac{\sigma }{12}\left\| H \right\|\frac{1}{\max (1,\operatorname{sgn} (\Re \xi )\left| \Re \xi \right|^\rho  )}\int_0^x \e^{\sigma \tau } \int_0^{ + \infty } \frac{\d}{\d s}\exp \left( V^{-}\left( \sigma ,\xi  + \tfrac{1}{2}(x - \tau ) + s \right) \right)\d s \d\tau 
\\ & = \frac{\sigma }{12}\left\| H \right\|\frac{1}{\max (1,\operatorname{sgn} (\Re \xi )\left| \Re \xi \right|^\rho  )}\int_0^x \e^{\sigma \tau } \left( \exp \left( V^{-}\left( \sigma ,\xi  + \tfrac{1}{2}(x - \tau ) \right) \right) - 1 \right)\d\tau 
\\ & \le \frac{1}{12}\left\| H \right\|\frac{\e^{V^{-}(\sigma ,\xi )} \e^{\sigma x} }{\max (1,\operatorname{sgn}(\Re \xi )\left| \Re \xi \right|^\rho  )}.
\end{align*}
Thus, $\mathcal{L}$ is a $\mathscr{B}_\sigma \to \mathscr{B}_\sigma$ linear operator whose induced operator norm $\left\|\mathcal{L}\right\| \leq \frac{1}{\sqrt 6} + \frac{1}{12} < \frac{1}{2}$.

Now, for each $\zeta \in B(2d)$, define the operator $\mathcal{T}_\zeta$, acting on the space $\mathscr{B}_\sigma$, via
\[
\mathcal{T}_\zeta  H(x,\xi ) = G^- \left( \zeta ,\xi  + \tfrac{1}{2}x \right) +\mathcal{L}H(x,\xi ).
\]
Since
\[
\left| \max (1,\operatorname{sgn} (\Re \xi )\left| \Re \xi \right|^\rho  )G^-  \left( \zeta ,\xi  + \tfrac{1}{2}x \right) \right| \le \left| \max \left( 1,\operatorname{sgn} \left( \Re \xi  + \tfrac{1}{2}x \right)\left| \Re \xi  + \tfrac{1}{2}x \right|^\rho  \right)G^-  \left( \zeta ,\xi  + \tfrac{1}{2}x \right) \right|
\]
and the right-hand side is bounded (cf. Theorem \ref{pbs}), $\mathcal{T}_\zeta  H(x,\xi )$ belongs to the space $\mathscr{B}_\sigma$. We also have
\begin{align*}
\left\| \mathcal{T}_\zeta  H_1 (x,\xi ) - \mathcal{T}_\zeta H_2 (x,\xi ) \right\| = \left\| \mathcal{L}H_1 (x,\xi ) - \mathcal{L}H_2 (x,\xi ) \right\| & \le \left\| \mathcal{L}\right\|\left\| H_1 (x,\xi ) - H_2 (x,\xi ) \right\| \\ & < \tfrac{1}{2}\left\| H_1 (x,\xi ) - H_2 (x,\xi ) \right\|
\end{align*}
for any $H_1 (x,\xi )$, $H_2 (x,\xi )\in \mathscr{B}_{\sigma}$. Consequently, $\mathcal{T}_\zeta$ is a $\mathscr{B}_{\sigma} \to \mathscr{B}_{\sigma}$ contraction. Therefore, for each $\zeta \in B(2d)$ there is a unique function $F(x,\xi;\zeta)$ in $\mathscr{B}_{\sigma}$ defined on $\mathbb{R}_{\geq 0} \times \Gamma^-(d)$ which satisfies
\begin{equation}\label{fixedpoint}
F(x,\xi;\zeta) = G^- \left( \zeta ,\xi  + \tfrac{1}{2}x \right) +\mathcal{L}F(x,\xi;\zeta).
\end{equation}
To complete the construction of $F^-(t,\xi)$, it remains to show that for $\zeta \in B(2d)$ these functions of $x$ can be combined to yield an analytic solution of \eqref{finalinteq} on $U(2d)\times \Gamma^-(d)$. We first claim that
\begin{equation}\label{fisg}
F(x,\xi;\zeta) = G^-(x+\zeta,\xi)
\end{equation}
when $\xi \in \Gamma^-(d)$, $\zeta \in B(2d)$, $x\geq 0$ and $x+\zeta \in B(2d)$. This follows by uniqueness since each side is a solutions of \eqref{fixedpoint}, and adding the restriction $x+\zeta \in B(2d)$ to the definition of $\mathscr{B}_{\sigma}$ and $\mathcal{L}$ does not increase the norm. Then, for all real numbers $\delta>0$ and $\zeta+\delta \in B(2d)$, we find from \eqref{fixedpoint} (with $x$ replaced by $x+\delta$) and \eqref{fisg} that
\begin{gather}\label{fineq1}
\begin{split}
& F(x + \delta ,\xi ;\zeta ) = G^-  \left( \zeta ,\xi  + \tfrac{1}{2}(x + \delta ) \right) - \frac{1}{2}\int_0^\delta \phi \left( \xi  + \tfrac{1}{2}(x + \delta  - \tau ) \right)G^- \left( \tau  + \zeta ,\xi  + \tfrac{1}{2}(x + \delta  - \tau ) \right)\d\tau 
\\ & + \frac{1}{2}\int_{\mathscr{P}\left(\xi  + \frac{1}{2}(x+\delta), + \infty \right)}  \int_0^\delta  \left(   - \frac{1}{4} \phi ^2 \left( w - \tfrac{1}{2}\tau \right) - \frac{1}{2}\phi '\left( w - \tfrac{1}{2}\tau \right) + \psi \left( w - \tfrac{1}{2}\tau \right) \right)G^-  \left( \tau  + \zeta ,w - \tfrac{1}{2}\tau  \right)\d\tau \d w 
\\ & - \frac{1}{2}\int_\delta ^{x + \delta } \phi \left( \xi  + \tfrac{1}{2}(x + \delta  - \tau ) \right)F\left( \tau ,\xi  + \tfrac{1}{2}(x + \delta  - \tau );\zeta \right)\d\tau 
\\ & + \frac{1}{2}\int_{\mathscr{P}\left(\xi  + \frac{1}{2}(x+\delta), + \infty \right)} \int_\delta ^{x + \delta } \left(   - \frac{1}{4}\phi ^2 \left( w - \tfrac{1}{2}\tau \right) - \frac{1}{2}\phi '\left( w - \tfrac{1}{2}\tau \right) + \psi \left( w - \tfrac{1}{2}\tau \right) \right)F\left( \tau ,w - \tfrac{1}{2}\tau ;\zeta \right)\d\tau  \d w .
\end{split}
\end{gather}
Now, regarding \eqref{finalinteq} as an equation satisfied by $G^-(x+\zeta,\xi)$, and in that equation replacing $x$ by $\delta$ and $\xi$ by $\xi+\frac{1}{2}x$, we see that the first three terms on the right-hand side of \eqref{fineq1} can be replaced by $G^- \left( \zeta  + \delta ,\xi  + \tfrac{1}{2}x \right)$. Then, if we replace $\tau$ by $\tau+\delta$ in the third and fourth integrals and $w$ by $w+\frac{1}{2}\delta$ in the fourth integral on the right-hand side of \eqref{fineq1}, we deduce
\begin{gather}\label{fineq2}
\begin{split}
& F(x + \delta ,\xi ;\zeta ) = G^- \left( \zeta  + \delta ,\xi  + \tfrac{1}{2}x \right) - \frac{1}{2}\int_0^x \phi \left( \xi  + \tfrac{1}{2}(x - \tau ) \right)F\left( \tau  + \delta ,\xi  + \tfrac{1}{2}(x - \tau );\zeta \right)\d\tau 
\\ & + \frac{1}{2}\int_{\mathscr{P}\left(\xi  + \frac{1}{2}x, + \infty \right)}  \int_0^x \left(   - \frac{1}{4}\phi ^2 \left( w - \tfrac{1}{2}\tau \right) - \frac{1}{2}\phi '\left( w - \tfrac{1}{2}\tau \right) + \psi \left( w - \tfrac{1}{2}\tau \right) \right)F\left( \tau  + \delta ,w - \tfrac{1}{2}\tau ;\zeta \right)\d\tau  \d w .
\end{split}  
\end{gather}
Hence, by uniqueness, it follows from \eqref{fixedpoint} and \eqref{fineq2} that for all real $x\geq 0$, $\delta>0$ and $\zeta, \zeta+\delta\in B(2d)$
\begin{equation}\label{finvar}
F(x,\xi;\zeta+\delta)=F(x+\delta,\xi;\zeta),
\end{equation}
since each is a solutions of the same integral equation.

The equality \eqref{finvar} shows that
\[
F^-(x+\zeta,\xi)=F(x,\xi;\zeta)
\]
is well defined for $\xi \in \Gamma^-(d)$, $\zeta \in B(2d)$ and $x\geq 0$. Let $t=x+\zeta$ be a point in $U(2d)$. We regard $x\geq 0$ as fixed and let $\zeta$ varying in a neighbourhood of the origin. Since $\mathcal{L}$ is independent of $\zeta$, and differentiation with respect to $\zeta$ commutes with $\mathcal{L}$, it follows that $F(x,\xi;\zeta)$ is an analytic function of $\zeta$. Hence $F^-(t,\xi)$ is analytic with respect to $t$ in $U(2d)$ (for each fixed $\xi \in \Gamma^-(d)$), with $\partial F^-/\partial t = \partial F/\partial \zeta$.

Since $F^-(t,\xi)$ is also continuous in $(t,\xi)$ and analytic in $\xi$ for each fixed $t$, it follows that $F^-(t,\xi)$ is an analytic function in $U(2d) \times \Gamma^-(d)$.

Now, by Theorem \ref{pbs}, for each $0<r<d$, there exists a number $C^->0$, independent of $\zeta$, $\xi$ and $x$, such that
\begin{equation}\label{festimate1}
2\left| G^-  \left( \zeta ,\xi  + \tfrac{1}{2}x \right) \right| \le \frac{C^-}{\max \left( 1,\operatorname{sgn}\left( \Re \xi  + \frac{1}{2}x \right)\left| \Re \xi  + \frac{1}{2}x \right|^\rho \right)}  \le \frac{C^-}{\max (1,\operatorname{sgn}(\Re \xi )\left| \Re \xi\right|^\rho )},
\end{equation}
when $\zeta \in B(2r)$, $\xi \in \Gamma^-(d)$ and $x\geq 0$. Thus, for each $\zeta \in B(2r)$, $G^-  \left( \zeta ,\xi  + \tfrac{1}{2}x \right) \in \mathscr{B}_\sigma$ and $\left\|G^-  \left( \zeta ,\xi  + \tfrac{1}{2}x \right)\right\|\leq \frac{1}{2} C^-$ for all $\sigma>0$. Therefore, by \eqref{fixedpoint},
\begin{equation}\label{festimate2}
\left\| F(x,\xi ;\zeta ) \right\| \le 2\left\| G^-  \left( \zeta ,\xi  + \tfrac{1}{2}x \right) \right\| \le C^-
\end{equation}
for $\zeta \in B(2r)$, $\xi \in \Gamma^-(d)$ and $x\geq 0$. Upon expressing $t\in U(2r)$ in the form $t=\Re t+\zeta$ with $\zeta \in B(2r)$, we obtain
\begin{equation}\label{festimate3}
\left| F^-  (t,\xi ) \right| \le \left\| F(\Re t,\xi ;\zeta ) \right\|\frac{\e^{V^{-}(\sigma ,\xi )} \e^{\sigma \Re t}}{\max (1,\operatorname{sgn}(\Re \xi )\left| \Re \xi \right|^\rho  )} \le \left( C^- \mathop {\sup }\limits_{\xi  \in \Gamma^- (d)} \e^{V^{-}(\sigma ,\xi )} \right)\frac{\e^{\sigma \Re t} }{\max (1,\operatorname{sgn}(\Re \xi )\left| \Re \xi \right|^\rho  )}
\end{equation}
for all $\xi \in \Gamma^-(d)$. The finiteness of the supremum on the right-hand side is guaranteed by Lemma \ref{lemma2}. This proves \eqref{Borelbound} and thus the convergence of \eqref{Laplacetrans}.

The validity of the asymptotic expansions \eqref{WKBexp2} follows from Theorem \ref{thm2} which we prove in the forthcoming section.

\section{Error bounds}\label{Section4}

In this section, we prove the error bounds given in Theorem \ref{thm2}. Let $N$ be an arbitrary positive integer. Integrating by parts $N$ times in \eqref{Laplacetrans}, we obtain \eqref{truncated} with
\begin{equation}\label{remainderformula}
R_N^ \pm  (u,\xi ) = \left( \left[ \frac{\d^{N - 1} F^\pm  (t,\xi )}{\d t^{N - 1} } \right]_{t = 0}  + \int_0^{ + \infty } \e^{ -ut} \frac{\d^N F^\pm  (t,\xi )}{\d t^N}\d t \right)\frac{1}{u^N} .
\end{equation}
Let $0<r<d$ and $\sigma>0$. From \eqref{festimate1}--\eqref{festimate3} and the corresponding results for $F^+ (t,\xi )$, we can infer that if
\begin{align*}
C^\pm = C^\pm(r) & = 2\mathop {\sup }\limits_{\substack{|t| =2r \\ \xi  \in \Gamma^\pm (d)}} (\max (1,\mp\operatorname{sgn}(\Re \xi )\left| \Re \xi  \right|^\rho  )\left| G^\pm  (t,\xi ) \right|) \\ & = 2\mathop {\sup }\limits_{\substack{|t| =2r \\ \xi  \in \Gamma^\pm (d)}} (\max (1,\mp\operatorname{sgn}(\Re \xi )\left| \Re \xi \right|^\rho  )\left| F^\pm  (t,\xi ) \right|)
\end{align*}
then
\begin{equation}\label{fbound}
\left| F^\pm (t,\xi ) \right| \le C^ \pm  \frac{\e^{V^{\pm}(\sigma ,\xi )} \e^{\sigma \Re t}}{\max (1,\mp\operatorname{sgn}(\Re \xi )\left| \Re \xi \right|^\rho  )},
\end{equation}
for all $(t,\xi) \in U(2r) \times \Gamma^\pm (d)$. Thus, by Cauchy's formula,
\begin{align*}
\left| \frac{\d^N F^\pm  (t,\xi )}{\d t^N } \right|  = \left| \frac{N!}{2\pi \im}\oint_{\left| w-t \right| = 2r} \frac{F^\pm  (w,\xi )}{(w - t)^{N + 1} }\d w  \right|  &  \le \frac{1}{(2r)^{N + 1} }\frac{N!}{2\pi}\oint_{\left| w-t \right| = 2r} \left| F^\pm  (t + (w - t),\xi ) \right|\left| \d w \right| 
\\ &  \le C^ \pm \frac{\e^{V^{\pm}(\sigma ,\xi )} \e^{\sigma t} }{\max (1,\mp\operatorname{sgn}(\Re \xi )\left| \Re \xi \right|^\rho  )}\frac{N!}{(2r)^N }
\end{align*}
for all $t\geq 0$ and $\xi \in \Gamma^\pm (d)$. Employing this estimate in \eqref{remainderformula} yields
\begin{align*}
\left| R_N^ \pm  (u,\xi ) \right| & \le \left( \left| \left[ \frac{\d^{N - 1} F^\pm  (t,\xi )}{\d t^{N - 1} } \right]_{t = 0} \right| + \int_0^{ + \infty } \e^{ - \Re ut} \left| \frac{\d^N F^\pm  (t,\xi )}{\d t^N} \right|\d t \right)\frac{1}{\left| u \right|^N}
\\ & \le C^ \pm  \left( \frac{2r}{N} + \frac{1}{\Re u - \sigma }\right) \frac{\e^{V^{\pm}(\sigma ,\xi )} }{\max (1,\mp\operatorname{sgn}(\Re \xi )\left| \Re \xi  \right|^\rho  )}\frac{N!}{(2r\left| u \right|)^N },
\end{align*}
provided $\Re u>\sigma$.

It is possible to obtain a simple upper bound for the quantities $C^\pm$ by referring to the results in Section \ref{Section2}. From the definitions of $C^\pm$ and $G^\pm(t,\xi)$, and the estimate \eqref{Abound}, we can assert that
\begin{equation}\label{Cupperbound}
C^{\pm} \le c_3 \sum\limits_{n = 0}^\infty  C_{n + 1} (d)\left( \frac{r}{d} \right)^n.
\end{equation}
Further simplification is possible by bounding the $C_n (d)$'s. To this end, we note that $1 + x < \e^x$ for all $x>0$, and
\[
\sum\limits_{k = 1}^{n - 1} \frac{1}{k} < \log n + \gamma ,\quad \sum\limits_{k = 1}^{n - 1} \frac{1}{k^2}  < \sum\limits_{k = 1}^\infty  \frac{1}{k^2}  = \frac{\pi^2}{6}
\]
for all $n\geq 1$ (cf. \cite[Ch. 8, \S3, Ex. 3.3]{Olver1997}). Therefore, using the definitions \eqref{Abound2} and \eqref{Abound3}, we deduce
\begin{align*}
C_n (d) & = \left( 1 + \frac{1}{2}c_1  + \frac{5}{4}c_1 d \right)\prod\limits_{k = 1}^{n - 1} \left( 1 + \frac{c_2}{2}\frac{1}{k} + \left( c_1  + \frac{9}{4}c_2 \right)\frac{d}{k} + \frac{7}{4}c_1 \frac{d^2}{k^2} \right)
\\ & \le \left( 1 + \frac{1}{2}c_1  + \frac{5}{4}c_1 d \right)\exp \left( \gamma \left( \frac{1}{2}c_2  + \left( c_1  + \frac{9}{4}c_2 \right)d \right) + \frac{7\pi ^2}{24}c_1 d^2 \right)n^{c_2 /2 + (c_1  + 9c_2 /4)d}
\end{align*}
with $c_1  = \max (c,c^2 )(1 + d)^\rho$ and $c_2  = 4\max (c,c^2 )(1 + \rho )(1 + (1 + d)^{1 + \rho } )/\rho$. Substituting this estimate into \eqref{Cupperbound} gives a computable upper bound on $C^\pm$.

\section{Convergent factorial series}\label{Section5}

In this section, we prove the convergent factorial expansions and the corresponding error bounds stated in Theorem \ref{thm3}. Let $d>0$ and $\omega >\frac{\pi}{4d}$, and define $r>0$ by $\omega = \frac{\pi}{4 r}$. Denote by $\Lambda (2r)$ the region in the $t$-plane defined by
\[
\Lambda (2r) = \left\{ t:\left| 1 - \e^{ - \pi t/(4r)} \right| < 1 \right\} \subset U(2r)\subset U(2d).
\]
According to Theorem \ref{thm1}, the functions $F^\pm  (t,\xi )$ are analytic in $\Lambda (2r) \times \Gamma^\pm(d)$ and satisfy the estimate \eqref{Borelbound} for any fixed $\sigma>0$. Consequently, from the fundamental theorem on factorial series \cite[Ch. 11, Theorem 46.2]{Wasow1965}, the functions $\eta^{\pm}(u,\xi)$ posses expansions of the form \eqref{factorialseries} which are absolutely convergent for $\Re u>\sigma>0$ ($\sigma$ arbitrary), for each $\xi \in \Gamma^\pm(d)$. It therefore remains to show that the convergence is uniform in $\xi$. To this end, we show that for $\xi \in \Gamma^\pm(d)$, $0 < \sigma < \omega$ and $n\geq 0$, it holds that
\begin{equation}\label{Bestimate}
\mathsf{B}_{n+1}^ \pm  (\omega,\xi ) =\O(1)\omega^n \frac{(n+1)^{\sigma /\omega  - 1}  n!}{\max (1,\mp\operatorname{sgn}(\Re \xi )\left| \Re \xi \right|^\rho  )}
\end{equation}
where the implied constants are independent of $\xi$ and $n$. The coefficients $\mathsf{B}_{n}^ \pm  (\omega,\xi )$ are the following series expansion coefficients
\[
F^\pm  (t,\xi ) = \sum\limits_{n = 0}^\infty \frac{\mathsf{B}_{n + 1}^ \pm  (\omega,\xi )}{\omega^n n!}\left( 1 - \e^{ - \omega t} \right)^n
\]
with $t\in \Lambda(2r)$ and $\xi \in \Gamma^\pm(d)$ (cf. \cite[pp. 325--326]{Wasow1965}). Hence, by Cauchy's integral formula,
\[
\mathsf{B}_{n+1}^ \pm  (\omega,\xi ) = \frac{\omega^{n+1} n! }{2\pi \im}\oint_{(0+)} \frac{F^\pm  (t,\xi )\e^{ - \omega t} }{\left( 1 - \e^{ - \omega t} \right)^{n+1} } \d t
\]
where the path of integration is a small loop that encircles the origin in the positive sense. Next, on nothing the bound \eqref{Borelbound} (and assuming $0 < \sigma < \omega$), we deform the contour of integration by expanding it to the boundary of the domain $\Lambda(2r)$. Then we split the resulting integral into two parts, the first from $t =  + \infty  + \frac{\pi }{2\omega}\im$ to $t =  - \frac{1}{\omega}\log 2$, and the second from $t =  - \frac{1}{\omega}\log 2$ to $t =  + \infty  - \frac{\pi }{2\omega}\im$. On making the change of integration variables
\[
s = \im\log ( 1 - \e^{ - \omega t} ),
\]
we obtain
\begin{gather}\label{Bintegral}
\begin{split}
\mathsf{B}_{n+1}^ \pm  (\omega,\xi ) =\; & \frac{\omega^n n!}{2\pi}\int_0^\pi  F^\pm  ( - (1/\omega )\log (1 - \e^{ - \im s} ),\xi ) \e^{\im ns} \d s 
\\ & + \frac{\omega^{n} n!}{2\pi}\int_{ - \pi }^0 F^\pm  ( - (1/\omega )\log (1 - \e^{ - \im s} ),\xi ) \e^{\im ns} \d s .
\end{split}
\end{gather}
From \eqref{Borelbound} we have
\begin{align*}
\left| F^\pm  ( - (1/\omega )\log (1 - \e^{ - \im s} ),\xi ) \right| & \le \frac{K^ \pm}{\max (1,\mp\operatorname{sgn}(\Re \xi )\left| \Re \xi \right|^\rho  )}\left( 2\left| \sin \left( \frac{s}{2} \right) \right| \right)^{ - \sigma /\omega } \\ & \le \frac{K^ \pm }{\max (1,\mp\operatorname{sgn}(\Re \xi )\left| \Re \xi \right|^\rho  )}\left( \frac{2\left| s \right|}{\pi }\right)^{ - \sigma /\omega } 
\end{align*}
provided $0<|s|<\pi$, with the constants $K^\pm$ being independent of $s$ and $\xi$. Employing these estimates in \eqref{Bintegral} and appealing to Lemma 12.3 and Ex. 12.2 of Olver \cite[pp. 99--100]{Olver1997}, we deduce the desired result \eqref{Bestimate}. Therefore, if $0 < \sigma < \omega$, the estimates \eqref{Bestimate} and the known asymptotics for the ratio of two gamma functions \cite[\href{http://dlmf.nist.gov/5.11.E12}{Eq. 5.11.12}]{DLMF} give
\[
\frac{\mathsf{B}_{n + 1}^\pm (\omega ,\xi )}{u(u + \omega ) \cdots (u + n\omega )}  =  \frac{\O(1)}{\max (1, \mp \operatorname{sgn}(\Re \xi )\left| \Re \xi \right|^\rho  )}\frac{1}{(n + 1)^{(u - \sigma )/\omega  + 1} },
\]
where the implied constants are independent of $\xi$ and $n$. An appeal to the Weierstrass $M$-test establishes that the series \eqref{factorialseries} indeed converge uniformly for $\xi \in \Gamma^\pm(d)$ provided $\Re u>\sigma$.

By the Stirling algorithm \cite[Proposition 2.8]{Delabaere2007}, the coefficients $\mathsf{A}_n^ \pm  (\xi )$ and $\mathsf{B}_n^ \pm  (\omega,\xi)$ are related to each other via
\begin{equation}\label{Balt}
\mathsf{B}_{n + 1}^ \pm  (\omega ,\xi ) = \sum\limits_{k = 0}^n ( - \omega )^{n - k} s(n,k)\mathsf{A}_{k + 1}^ \pm  (\xi )
\end{equation}
for any $n\geq 0$, where $s(n,k)$ denote the Stirling numbers of the first kind \cite[\href{http://dlmf.nist.gov/26.8.i}{\S26.8(i)}]{DLMF}. Combining \eqref{Balt} with \cite[\href{http://dlmf.nist.gov/26.8.E18}{Eq. 26.8.18}]{DLMF} and the recurrence relations \eqref{Arec} confirms \eqref{Brec}.

We conclude this section with the proof of the estimate \eqref{fseriesbound}. First, we note that, by \eqref{fbound},
\[
\left| F^\pm  ( - (1/\omega )\log (1 - \e^{ - \im s} ),\xi ) \right| \le C^ \pm\frac{\e^{V^{\pm}(\sigma,\xi)}}{\max (1,\mp\operatorname{sgn}(\Re \xi )\left| \Re \xi \right|^\rho  )}\left( \frac{2\left| s \right|}{\pi }\right)^{ - \sigma /\omega } 
\]
whenever $0<|s|<\pi$. Substitution of these bounds into \eqref{Bintegral} and a straightforward estimation yield
\begin{equation}\label{Bestimate2}
\left| \mathsf{B}_{n+1}^ \pm  (\omega,\xi ) \right| \le  \frac{C^ \pm}{2^{\sigma /\omega } }\frac{\omega ^{n + 1} }{\omega  - \sigma }\frac{\e^{V^{\pm}(\sigma ,\xi )} n!}{\max (1,\mp\operatorname{sgn}(\Re \xi )\left| \Re \xi \right|^\rho )},
\end{equation}
for $\xi \in \Gamma^\pm(d)$, $0 < \sigma < \omega$ and $n\geq 0$. It can readily be shown, using the beta integral \cite[\href{http://dlmf.nist.gov/5.12.E1}{Eq. 5.12.1}]{DLMF}, that
\begin{equation}\label{facestimate}
\frac{1}{\left| u(u + \omega ) \cdots (u + n\omega ) \right|} \leq \frac{\Gamma (\Re u/\omega )}{\omega ^{n + 1} \Gamma (\Re u/\omega  + n + 1)}.
\end{equation}
Thus, from \eqref{Bestimate2} and \eqref{facestimate}, we have
\begin{align*}
\left| \sum\limits_{n = N}^\infty  \frac{\mathsf{B}_{n + 1}^ \pm  (\omega ,\xi )}{u(u+\omega)\cdots(u+n\omega)} \right|  
 & \le \frac{C^ \pm}{2^{\sigma /\omega } }\frac{\Gamma (\Re u/\omega )}{\omega  - \sigma }\frac{\e^{V^{\pm}(\sigma ,\xi )} }{\max (1,\mp\operatorname{sgn}(\Re \xi )\left| \Re \xi \right|^\rho)}\sum\limits_{n = N}^\infty  \frac{n!}{\Gamma (\Re u/\omega +n + 1)} 
\\ & \le \frac{C^ \pm}{2^{\sigma /\omega } }\frac{\Gamma (\Re u/\omega )}{\omega  - \sigma }\frac{\e^{V^{\pm}(\sigma ,\xi )} }{\max (1,\mp\operatorname{sgn}(\Re \xi )\left| \Re \xi \right|^\rho)}\sum\limits_{n = N}^\infty  \frac{1}{(n + 1)^{\Re u/\omega } } 
\\ & \le \frac{C^ \pm}{2^{\sigma /\omega } }\frac{\Gamma (\Re u/\omega )}{\omega  - \sigma }\frac{\e^{V^{\pm}(\sigma ,\xi )} }{\max (1,\mp\operatorname{sgn}(\Re \xi )\left| \Re \xi \right|^\rho)}\sum\limits_{n = N}^\infty \int_{n + 1/2}^{n + 3/2} \frac{\d t}{t^{\Re u/\omega }}
\\ & = \frac{C^ \pm}{2^{\sigma /\omega } }\frac{\Gamma (\Re u/\omega  - 1)}{\omega  - \sigma }\frac{\e^{V^{\pm}(\sigma ,\xi )} }{\max (1,\mp\operatorname{sgn}(\Re \xi )\left| \Re \xi \right|^\rho)}\left(N+\frac{1}{2}\right)^{1 - \Re u/\omega } 
\end{align*}
for $\Re u>\omega >\sigma>0$, $\xi \in \Gamma^\pm(d)$ and $N\geq 0$. In the second step use has been made of the known inequality for the ratio of two gamma functions \cite[\href{http://dlmf.nist.gov/5.6.E8}{Eq. 5.6.8}]{DLMF}, and in the third step we employed the Hermite--Hadamard inequality for convex functions \cite[Ch. 1, Eq. (1.32)]{Niculescu2018}.

\section{Applications}\label{Section6}

In this section, we give two illustrative examples to demonstrate the applicability of our theory.

\subsection{A radial Schr\"odinger equation}
As a first application of the theory, we consider the radial Schr\"odinger equation
\begin{equation}\label{radialeq}
\frac{\d^2 w(u,z)}{\d z^2 } = u^2 \left( \frac{(z - 1)^2}{4} - \left(\lambda +\tfrac{1}{2}\right)\frac{1}{u} + \frac{\ell (\ell  + 1)}{z^2}\frac{1}{u^2} \right)w(u,z),
\end{equation}
which is associated with the rotating harmonic oscillator. In physical applications, the parameters $u$, $\lambda$ and $\ell$ are real and non-negative, with $\ell$ an integer, and $u$ large (see, e.g., \cite{Froman1978}). We introduce a branch cut along the real axis from $1$ to $-\infty$ so that $z$ is restricted to $\mathbf{D} = \left\{ z:\left| \arg (z - 1) \right| < \pi \right\}$. Application of the transformation
\[
\xi  = \xi (z) = \int_1^z \frac{t - 1}{2}\d t  = \frac{(z - 1)^2}{4}, \quad w(u,z) = \left(\frac{2}{z - 1}\right)^{1/2} W(u,\xi ),
\]
brings \eqref{radialeq} into the standard form \eqref{Eq}, where
\begin{equation}\label{radialeq2}
\phi (\xi ) =  - \left(\lambda +\tfrac{1}{2}\right)\frac{1}{\xi} \quad \text{and} \quad \psi (\xi ) = \frac{\ell (\ell  + 1)}{z^2 \xi } - \frac{3}{16\xi ^2} .
\end{equation}
The domain $\mathbf{D}$ is mapped into the sectorial region $\mathbf{G} = \left\{ \xi :\left| \arg \xi \right| < 2\pi \right\}$ on the universal covering of $\mathbb{C} \setminus \left\{0\right\}$.

To apply Theorem \ref{thm1}, we have to ensure that either Condition \ref{cond1} or Condition \ref{condv2} is satisfied. First, we have to define a suitable subdomain $\Delta$ of $\mathbf{G}$. To this end, observe that $\phi(\xi)$ becomes unbounded as $\left| \xi  \right| \to 0$ ($z\to 1$). Similarly, $\psi(\xi)$ is unbounded when $\left| \xi  \right| \to 0$ or $\xi  \to \frac{1}{4}\e^{ \pm 2\pi \im}$ ($z\to 1$ or $|z|\to 0$). Hence, it is natural to fix $0<\eps \ll \frac{1}{4}$ and define
\[
\Delta  = \left\{ \xi :\xi  \in \mathbf{G},\, \left| \xi  \right| > \eps,\, \left| \xi  - \tfrac{1}{4}\e^{ \pm 2\pi \im} \right| > \eps  \right\}.
\]
The functions $\phi(\xi)$ and $\psi(\xi)$ are analytic and bounded throughout $\Delta$ and thus, since $\xi \sim \frac{1}{4}z^2$ for large values of $|z|$, Condition \ref{condv2} is satisfied with $\rho = 1/2$. Let $d>0$ be arbitrary and define the subdomains $\Gamma^\pm (d)$ of $\Delta$ by
\[
\Gamma^+(d) = \left\{ \xi :0 < \arg \xi  < 2\pi ,\, \left| \xi  \right| > d + \eps  \right\}\setminus \left\{ \xi :\Re \xi  \ge 0,\, \left| \Im \xi \right| \le d + \eps  \right\}
\]
and
\[
\Gamma^-(d) = \left\{ \xi :\left| \arg \xi \right| < \pi ,\, \left| \xi \right| > d + \eps  \right\}\setminus \left\{ \xi :\Re \xi  \le 0,\, \left| \Im \xi \right| \le d + \eps \right\}.
\]
These domains $\Gamma^+(d)$ and $\Gamma^-(d)$ and the corresponding $z$-domains (which we label $D^+(d)$ and $D^-(d)$), are depicted in Figures \ref{Figure1} and \ref{Figure2}. Taking $\alpha^\pm = 1$ in \eqref{WKBsol} and employing Theorem \ref{thm1}, we find that the differential equation \eqref{Eq}, with $\phi(\xi)$ and $\psi(\xi)$ specified in \eqref{radialeq2}, has solutions of the form
\[
W^ \pm  (u,\xi ) = \xi ^{ \mp (2\lambda+1) /4} \e^{ \pm u\xi } \left( 1 + \eta ^ \pm  (u,\xi ) \right),
\]
which are analytic in $\left\{ u:\Re u > 0 \right\} \times \Gamma^\pm (d)$ and satisfy the limit conditions \eqref{limitreq}. Returning to the original variables of \eqref{radialeq}, we write $\mu^\pm (u,z) = \eta^\pm (u,\xi (z))$, and
\[
w^\pm (u,z) =\left(\frac{2}{z - 1}\right)^{1/2} W^ \pm (u,\xi ) = \left( \frac{z - 1}{2} \right)^{ \mp (\lambda  + 1/2) - 1/2} \exp\!\left(\pm u\frac{(z - 1)^2}{4}\right) \left( 1 + \mu^\pm (u,z) \right).
\]
These solutions are holomorphic in $\left\{ u:\Re u > 0 \right\} \times D^\pm (d)$, and $\mu^\pm (u,z) \to 0$ as $z \to \im \infty$\footnote{Throughout this section we use the convention that $\pm \im = \e^{ \pm \frac{\pi}{2}\im}$.} and $z \to  + \infty$, respectively.

Asymptotic and convergent series for $w^\pm (u,z)$ are given by Theorems \ref{thm1}--\ref{thm3}. In terms of $z$, the asymptotic expansion coefficients $\mathsf{A}_n^\pm = \mathsf{A}_n^\pm(\xi(z))$ extend analytically to the whole of $\mathbf{D}$ and are defined recursively by $\mathsf{A}_0^\pm =1$ and
\begin{gather}\label{ex1rec}
\begin{split}
& \mathsf{A}_{n + 1}^ \pm = \left( \lambda  + \tfrac{1}{2} \right)\frac{2}{(z - 1)^2}\mathsf{A}_n^\pm \mp \frac{1}{z - 1}\frac{\d \mathsf{A}_n^ \pm }{\d z} \\ & \mp \left( \frac{3}{4}  \mp 2\left( \lambda  + \tfrac{1}{2} \right) + \left( \lambda  + \tfrac{1}{2} \right)^2 \right)\int_{\mathscr{P}( + \infty ,z)} \frac{\mathsf{A}_n^ \pm}{(t - 1)^3}\d t  \pm \ell (\ell  + 1)\int_{\mathscr{P}( + \infty ,z)} \frac{\mathsf{A}_n^ \pm}{t^2 (t - 1)}\d t 
\end{split}
\end{gather}
for $n\geq 0$. In particular,
\[
\mathsf{A}_1^ \pm   =  \pm \left( \frac{3}{8} \pm \left( \lambda  + \tfrac{1}{2} \right) + \frac{1}{2}\left( \lambda  + \tfrac{1}{2} \right)^2 \right)\frac{1}{(z - 1)^2 } \pm \ell (\ell  + 1)\left( \log \! \left( 1 - \frac{1}{z} \right) + \frac{1}{z} \right) .
\]
The calculation of the higher coefficients quickly becomes rather laborious, and hence the alternative method discussed in Appendix \ref{Appendix} may be preferred over \eqref{ex1rec}. The coefficients $\mathsf{B}_n^\pm$ of the corresponding factorial series expansions can be computed via the Stirling algorithm \eqref{Balt}.

\begin{figure}[ht]
    \begin{subfigure}[c]{0.45\textwidth}
			\centering
			\includegraphics[width=\textwidth]{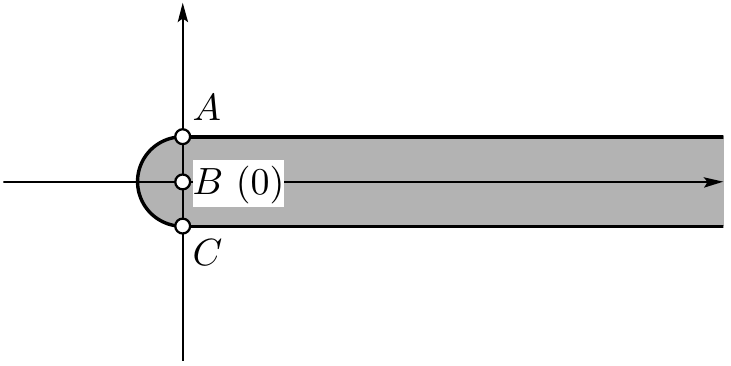}
      \caption{}
    \end{subfigure}
    \hfill
    \begin{subfigure}[c]{0.5\textwidth}
		\centering 
      \includegraphics[width=\textwidth]{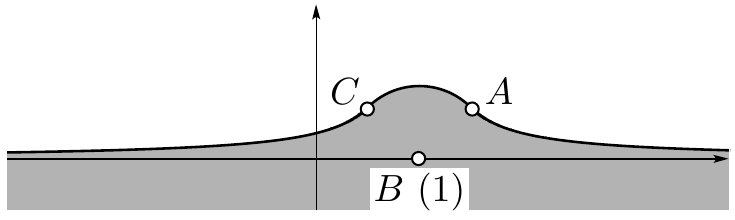}
      \caption{}
    \end{subfigure}
    \caption{(a) Domain $\Gamma^+(d)$ (unshaded). (b) Domain $D^+(d)$ (unshaded).}
		\label{Figure1}
  \end{figure}
	
	\begin{figure}[ht]
    \begin{subfigure}[c]{0.45\textwidth}
			\centering
			\includegraphics[width=\textwidth]{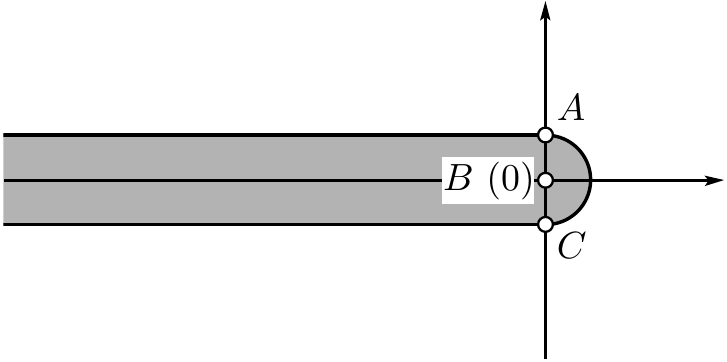}
      \caption{}
    \end{subfigure}
    \hspace{20pt}
    \begin{subfigure}[c]{0.225\textwidth}
		\centering 
      \includegraphics[width=\textwidth]{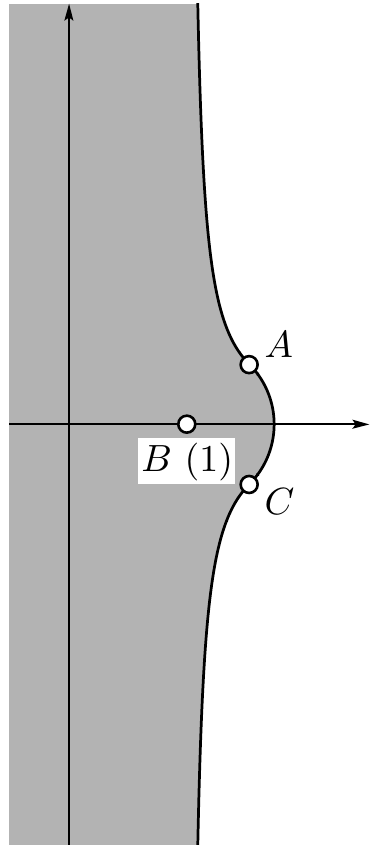}
      \caption{}
    \end{subfigure}
    \caption{(a) Domain $\Gamma^-(d)$ (unshaded). (b) Domain $D^-(d)$ (unshaded).}
		\label{Figure2}
  \end{figure}

We remark that Dunster \cite{Dunster1998} considered the problem of finding rigorous asymptotic expansions of the eigenvalues of the rotating harmonic oscillator. His analysis relies on WKB theoretic methods applied to the equation \eqref{radialeq}, although his definition of the WKB solutions is different from ours.

\subsection{The Bessel equation}
As a second application, we consider the Bessel equation
\[
\frac{\d^2 w(\nu ,z)}{\d z^2 } = \nu ^2 \left( \frac{1}{z^2} + \frac{2\kappa}{z^2}\frac{1}{\nu} + \left( \frac{4\kappa ^2  - 1}{4z^2 } - 1 \right)\frac{1}{\nu ^2 }\right)w(\nu ,z),
\] 
where $w(\nu ,z) = z^{1/2} H_{\nu  + \kappa }^{(1)} (z)$, $z^{1/2} H_{\nu  + \kappa }^{(2)} (z)$, or any linear combination of these functions \cite[\href{http://dlmf.nist.gov/10.13.E1}{Eq. 10.13.1}]{DLMF}. The parameter $\kappa$ can assume arbitrary complex values. With $\nu$ playing the role of the parameter $u$, the transformations \eqref{Liouville} could be applied with $f_0(z)=z^{-2}$. Unfortunately, the functions $\phi (\xi)$ and $\psi(\xi)$ appearing in the resulting equation satisfy neither of the requirements posed in Condition \ref{cond1} or \ref{condv2}. To overcome this obstruction we replace $z$ by $\nu z$, and consider instead the equation
\begin{equation}\label{Besseleq}
\frac{\d^2 w(\nu ,z)}{\d z^2} = \nu ^2 \left( \frac{1 - z^2}{z^2} + \frac{2\kappa}{z^2}\frac{1}{\nu} + \frac{4\kappa ^2  - 1}{4z^2 }\frac{1}{\nu ^2 } \right)w(\nu ,z),
\end{equation}
which has particular solutions $w(\nu ,z) = z^{1/2} H_{\nu  + \kappa }^{(1)} (\nu z)$ and $w(\nu ,z) = z^{1/2} H_{\nu  + \kappa }^{(2)} (\nu z)$. This equation can now be treated by means Theorem \ref{thm1}. Similarly to the first example, we make a branch cut along the real axis from $1$ to $-\infty$ and restrict $z$ to the domain $\mathbf{D} = \left\{ z:\left| \arg (z - 1) \right| < \pi \right\}$. The transformation
\begin{equation}\label{Liouville3}
\xi  = \xi (z) = \int_1^z \frac{(1-t^2)^{1/2}}{t} \d t  = \im((z^2  - 1)^{1/2}  - \arcsec z ),\quad w(\nu ,z) = -\im\frac{z^{1/2}}{(z^2  - 1)^{1/4}} W(\nu ,\xi)
\end{equation}
brings \eqref{Besseleq} into the standard form \eqref{Eq} (with $\nu$ in place of $u$), with
\begin{equation}\label{Besseleq2}
\phi (\xi ) = -\frac{2\kappa }{z^2-1 } \quad \text{and} \quad \psi (\xi ) = \frac{1}{4}\frac{1-4\kappa ^2 }{z^2  - 1} + \frac{3}{2}\frac{1}{(z^2  - 1)^2} + \frac{5}{4}\frac{1}{(z^2  - 1)^3}.
\end{equation}
The image of $\mathbf{D}$ under the mapping \eqref{Liouville3} can be determined by the following considerations:
\begin{enumerate}[(i)]
	\item When $z>1$, $\xi$ is purely imaginary; $z=1$, $+\infty$ corresponding to $\xi=0$, $\im\infty$, respectively.
	\item For $z$ near $1$, $\xi\sim\frac{2\sqrt 2}{3}\im(z - 1)^{3/2}$; for large $|z|$, $\xi \sim \im z$.
	\item If $z=1+x\e^{\pm\pi \im}$ ($0<x<1$), then $\pm\xi$ is real and positive; $z=1+\e^{\pm\pi \im}$ corresponding to $\xi = \infty\e^{2\pi \im}$, $\infty\e^{-\pi \im}$.
	\item If $z=1+x\e^{\pm\pi \im}$ ($1<x<2$), then $\xi+\pi \im$ is real and negative; $z=1+2\e^{\pm\pi \im}$ corresponding to $\xi = \pi \e^{\frac{3\pi}{2}\im}$, $-\pi \im$.
	\item If $z= 1+x\e^{\pm\pi \im}$ ($x>2$), then $\xi$ is purely imaginary; $z=1+\infty\e^{\pm\pi \im}$ corresponding to $\xi = \infty \e^{\frac{3\pi}{2} \im}$, $-\im\infty$.
\end{enumerate}
Thus $\mathbf{D}$ is transformed into the following subdomain of the universal covering of $\mathbb{C}\setminus \left\{0\right\}$:
\[
\mathbf{G} = \left\{ \xi : - \tfrac{\pi}{2} < \arg \xi  < \tfrac{3\pi}{2} \right\} \cup \left\{ \xi : -\pi < \arg \xi \leq - \tfrac{\pi}{2} \; \text{or} \; \tfrac{3\pi}{2} \leq \arg \xi  < 2\pi ,\, - \pi < \Im \xi < 0 \right\} .
\]
To employ Theorem \ref{thm1}, we have to choose an appropriate subdomain $\Delta$ of $\mathbf{G}$ for which either Condition \ref{cond1} or Condition \ref{condv2} holds. We note that $\phi (\xi )$ and $\psi(\xi)$ become singular as $z \to  \pm 1$ or, equivalently, as $\xi  \to 0$ or $\xi \to -\pi \im$, $\pi \e^{\frac{3\pi}{2}\im}$. Thus, it is natural to fix $0<\eps \ll \pi$ and define
\[
\Delta  = \left\{ \xi :\xi  \in \mathbf{G},\, \left| \xi  \right| > \eps,\, \left| \xi + \pi \im \right| > \eps,\, \left| \xi - \pi \e^{\frac{3\pi}{2}\im} \right| > \eps \right\} .
\]
Taking into account that $\xi \sim \im z$ for large values of $|z|$ and that $\phi (\xi )$, $\psi(\xi)$ are bounded analytic functions on $\Delta$, we can assert that Condition \ref{cond1} holds with $\rho = 1$. Let $d>0$ and define the subdomains $\Gamma^\pm (d)$ of $\Delta$ by
\begin{multline*}
\Gamma^+ (d) = \left\{ \xi :0 < \arg \xi  < 2\pi ,\,\left| \xi  \right| > d + \eps ,\,\left| \xi  - \pi \e^{\frac{3\pi}{2}\im} \right| > d + \eps \right\}\\
\setminus \left( \left\{ \xi :\Re \xi  \ge 0,\,\left| \Im \xi \right| \le d + \eps \; \text{or} \; \Im \xi  \le  - \pi  + d + \eps \right\} \cup \left\{ \xi :\Re \xi  \ge -d ,\,\Im \xi  <  - \pi \right\} \right)
\end{multline*}
and
\begin{multline*}
\Gamma^- (d) = \left\{ \xi :\left| \arg \xi \right| < \pi ,\, \left| \xi  \right| > d + \eps ,\, \left| \xi + \pi \im \right| > d + \eps \right\} \\
\setminus \left( \left\{ \xi :\Re \xi \leq 0,\,\left| \Im \xi \right| \le d + \eps \; \text{or} \; \Im \xi  \le  - \pi  + d + \eps \right\} \cup \left\{ \xi :\Re \xi \leq d ,\,\Im \xi  <  - \pi \right\} \right) ,
\end{multline*}
respectively. The domains $\Gamma^\pm(d)$ and the corresponding $z$-domains $D^\pm(d)$ are shown in Figures \ref{Figure3} and \ref{Figure4}. Choosing $\alpha^\pm =\im\infty$ in \eqref{WKBsol} and applying Theorem \ref{thm1}, we find that the differential equation \eqref{Eq}, with $\phi(\xi)$ and $\psi(\xi)$ specified in \eqref{Besseleq2}, has solutions of the form
\[
W^ \pm  (\nu,\xi ) = \left( \frac{\xi  - 1}{\xi  + 1} \right)^{ \pm \kappa /2} \e^{ \pm \nu\xi } \left( 1 + \eta ^ \pm  (\nu,\xi ) \right),
\]
which are analytic in $\left\{ \nu:\Re \nu > 0 \right\} \times \Gamma^\pm (d)$ and satisfy \eqref{limitreq}. Returning to the original variables of \eqref{Besseleq}, we write $\mu^\pm (\nu,z) = \eta^\pm (\nu,\xi (z))$, and
\[
w^ \pm  (\nu ,z) =  -\im\frac{z^{1/2}}{(z^2  - 1)^{1/4}}W^\pm (\nu ,\xi ) =  -\im\left( \frac{\xi  - 1}{\xi  + 1} \right)^{ \pm \kappa /2} \frac{z^{1/2} \e^{ \pm \nu \xi } }{(z^2  - 1)^{1/4}}\left( 1 + \mu^\pm (\nu ,z) \right) .
\]
These solutions are holomorphic in $\left\{ \nu:\Re \nu > 0 \right\} \times D^\pm (d)$, and $\mu^\pm (\nu,z) \to 0$ as $z \to \pm\im \infty$. 

To identify the standard solutions $z^{1/2} H_{\nu  + \kappa }^{(1)} (\nu z)$ and $z^{1/2} H_{\nu  + \kappa }^{(2)} (\nu z)$ of \eqref{Besseleq} in terms of $w^+ (\nu,z)$ and $w^- (\nu,z)$, observe that for fixed $\nu$, $w^+ (\nu,z)$ is recessive as $\xi\to \infty \e^{\pi \im}$, that is, as $z \to \im\infty$, and $w^- (\nu,z)$ is recessive as $\xi\to +\infty$, that is, as $z \to -\im\infty$. Consequently, $z^{1/2} H_{\nu  + \kappa }^{(1)} (\nu z)$ is a multiple\footnote{By ``multiple'' we mean a number that is independent of $z$, but may depend on $\nu$ and $\kappa$.} of $w^+ (\nu,z)$ and $z^{1/2} H_{\nu  + \kappa }^{(2)} (\nu z)$ is a multiple of $w^- (\nu,z)$. A convenient way of identifying these multiples is to compare the limiting forms of the solutions as $|z|\to \infty$. From \cite[\href{http://dlmf.nist.gov/10.17.i}{\S10.17(i)}]{DLMF} and $\xi  = \im z - \frac{\pi }{2}\im + \O(\left| z \right|^{ - 1} )$, we have
\begin{align*}
z^{1/2} H_{\nu  + \kappa }^{(1,2)} (\nu z)  & = \left(\frac{2}{\pi \nu}\right)^{1/2} \exp \! \left(  \pm \im\left( \nu z - \frac{\pi}{2}\nu  - \frac{\pi }{4} \right) \right) \left( 1 + \O\!\left(\frac{1}{|z|}\right) \right) \\ & = 
\e^{ \mp \frac{\pi}{4} \im} \left( \frac{2}{\pi \nu } \right)^{1/2} \left( \frac{\xi  - 1}{\xi  + 1} \right)^{ \pm \kappa /2} \frac{z^{1/2} \e^{ \pm \nu \xi } }{(z^2  - 1)^{1/4}}\left( 1 + \O\!\left( \frac{1}{\left| z \right|} \right) \right)
\end{align*}
as $z \to \pm\im\infty$ with $\nu$, $\left| \arg \nu \right| < \frac{\pi}{2}$, and $\kappa$ being fixed. Thus we deduce that for $\Re \nu >0$ and $z \in D^\pm (d)$,
\begin{equation}\label{Hankel}
H_{\nu  + \kappa }^{(1,2)} (\nu z) = \e^{\left( \frac{\pi}{2} \mp \frac{\pi}{4} \right)\im} \left( \frac{2}{\pi \nu z} \right)^{1/2} w^\pm (\nu ,z ).
\end{equation}
Finally, using the well-known relationship between the Hankel and Bessel functions \cite[\href{http://dlmf.nist.gov/10.4.E4}{Eq. 10.4.4}]{DLMF}, we find from \eqref{Hankel} that
\begin{align}
J_{\nu  + \kappa } (\nu z) & = \frac{1}{(2\pi \nu z)^{1/2}}\left( \e^{\frac{\pi}{4}\im} w^+ (\nu ,z) - \e^{-\frac{\pi}{4}\im} w^- (\nu ,z) \right),\\ Y_{\nu  + \kappa } (\nu z) & = \frac{1}{(2\pi \nu z)^{1/2}}\left( \e^{-\frac{\pi}{4}\im} w^+ (\nu ,z) - \e^{\frac{\pi}{4}\im} w^- (\nu ,z) \right),\label{Bessel}
\end{align}
provided $\Re \nu >0$ and $z \in D^+ (d) \cap D^- (d)$. For Borel re-summed WKB expansions of the Bessel function $J_{\nu  + \kappa } (\nu z)$ in other $z$-regions, the interested reader is referred to \cite{Takahashi2019}.

\begin{figure}[ht]
    \begin{subfigure}[c]{0.45\textwidth}
			\centering
			\includegraphics[width=\textwidth]{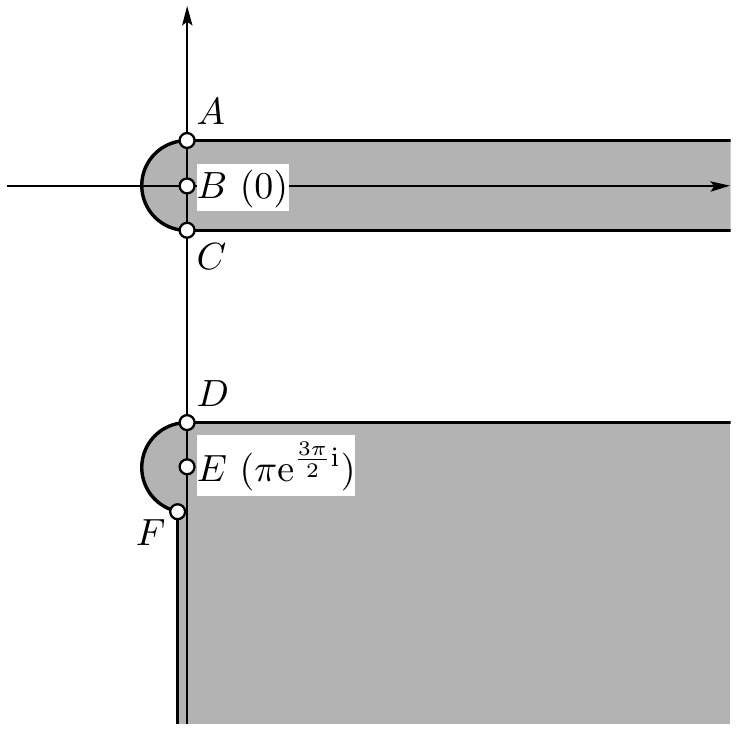}
      \caption{}
    \end{subfigure}
    \hfill
    \begin{subfigure}[c]{0.45\textwidth}
		\centering 
      \includegraphics[width=\textwidth]{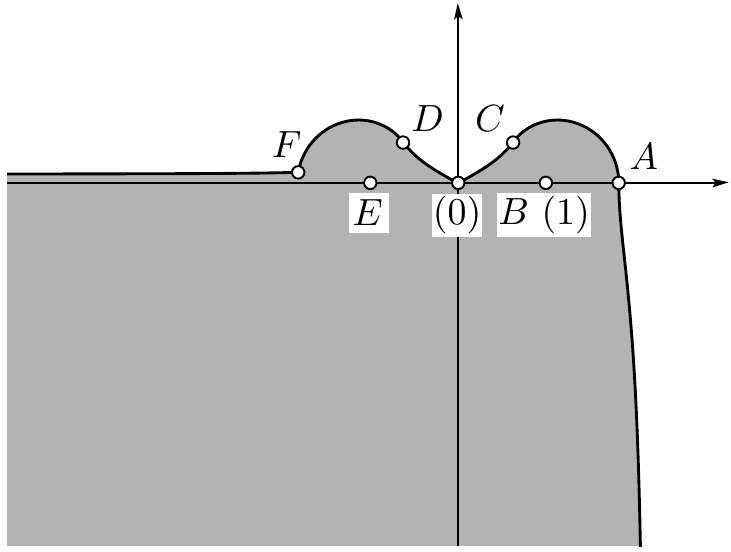}
      \caption{}
    \end{subfigure}
    \caption{(a) Domain $\Gamma^+(d)$ (unshaded). (b) Domain $D^+(d)$ (unshaded).}
		\label{Figure3}
  \end{figure}

\begin{figure}[ht]
    \begin{subfigure}[c]{0.45\textwidth}
			\centering
			\includegraphics[width=\textwidth]{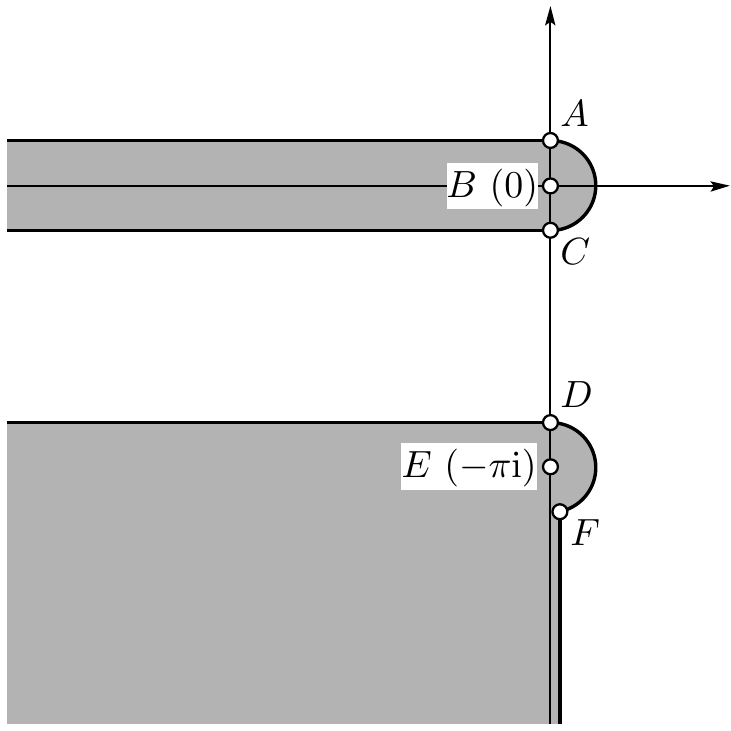}
      \caption{}
    \end{subfigure}
    \hfill
    \begin{subfigure}[c]{0.45\textwidth}
		\centering
      \includegraphics[width=\textwidth]{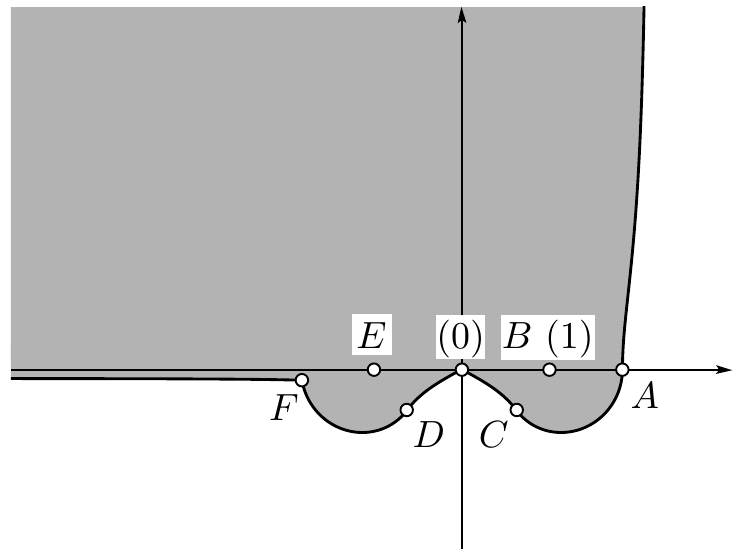}
      \caption{}
    \end{subfigure}
    \caption{(a) Domain $\Gamma^-(d)$ (unshaded). (b) Domain $D^-(d)$ (unshaded).}
		\label{Figure4}
  \end{figure}

Asymptotic and convergent series for $w^\pm (\nu,z)$ follow from Theorems \ref{thm1}--\ref{thm3}. In the special case that $z>1$ and $\kappa=0$, the results \eqref{Hankel}--\eqref{Bessel} combined with the asymptotic expansions of $w^\pm (\nu,z)$ reproduce the well-known Debye expansions (see, e.g., \cite[Ch. II, \S4]{Wong2001}). For positive real $\nu$ and $z$, and for real $\kappa$, the functions $H_{\nu + \kappa}^{(1)} (\nu z)$ and $H_{\nu + \kappa}^{(2)} (\nu z)$ are complex conjugates of each other. Accordingly,
\[
\mathsf{A}_n^- (\xi ) = \mathsf{A}_n^+ (\xi \e^{\pi \im} ),
\]
hence we can omit the superscript $+$ and write $\mathsf{A}_n^+ = \mathsf{A}_n$. In terms of $z$, the asymptotic expansion coefficients $\mathsf{A}_n = \mathsf{A}_n (\xi(z))$ continue analytically to the whole of $\mathbf{D}$ and are given recursively by $\mathsf{A}_0 =1$ and
\begin{align*}
\mathsf{A}_{n + 1}  = \frac{\kappa }{z^2  - 1}\mathsf{A}_n & + \frac{\im}{2}\frac{z}{(z^2  - 1)^{1/2}}\frac{\d \mathsf{A}_n }{\d z} \\ & + \frac{\im}{8}\int_{\mathscr{P}( + \infty ,z)} t\left( \frac{1-4\kappa ^2}{(t^2  - 1)^{3/2}} - \frac{8\im\kappa}{(t^2  - 1)^2}+ \frac{5}{(t^2  - 1)^{5/2}} \right)\mathsf{A}_n \d t
\end{align*}
for $n\geq 0$. In particular,
\[
\mathsf{A}_1 = - \frac{\im}{8}\frac{1-4\kappa ^2}{(z^2  - 1)^{1/2}} + \frac{1}{2}\frac{\kappa }{z^2  - 1} - \frac{5\im}{24}\frac{1}{(z^2  - 1)^{3/2}}.
\]
It is expected from this result that higher coefficients may be expressed conveniently in terms of the variable
\[
p = -\im(z^2  - 1)^{-1/2}.
\]
We find in terms of $p$
\[
\mathsf{A}_{n + 1} = - \kappa p^2 \mathsf{A}_n + \frac{1}{2}p^2 (1 - p^2)\frac{\d \mathsf{A}_n}{\d p} + \frac{1}{8}\int_0^p (1- 4\kappa ^2 + 8\kappa t -5t^2)\mathsf{A}_n \d t .
\]
It is easily deduced from this relation that $\mathsf{A}_n$ is a polynomial in $p$ of degree $3n$. The first four polynomials are found to be
\begin{gather*}
\mathsf{A}_0 = 1, \quad \mathsf{A}_1 = \frac{1-4\kappa ^2}{8}p - \frac{\kappa}{2}p^2 - \frac{5}{24}p^3,
\\ \mathsf{A}_2 = \frac{9 - 40\kappa ^2  + 16\kappa ^4 }{128}p^2 - \frac{29\kappa  - 20\kappa ^3 }{48}p^3  - \frac{77 - 140\kappa ^2 }{192}p^4 + \frac{35\kappa }{48}p^5  + \frac{385}{1152}p^6, \\
\mathsf{A}_3 = \frac{225 - 1036\kappa ^2  + 560\kappa ^4  - 64\kappa ^6 }{3072}p^3  - \frac{751\kappa  - 728\kappa ^3  + 112\kappa ^5 }{768}p^4 \\ - \frac{4563 - 12040\kappa ^2  + 2800\kappa ^4 }{5120}p^5  + \frac{3619\kappa  - 1540\kappa ^3 }{1152}p^6  + \frac{17017 - 20020\kappa ^2 }{9216}p^7  - \frac{5005}{2304}\kappa p^8  - \frac{85085}{82944}p^9 .
\end{gather*}
The coefficients of the corresponding factorial series expansions can be computed by applying similar transformations to \eqref{Brec} or by the Stirling algorithm \eqref{Balt}.

\section{Discussion}\label{Section7}

We studied the Borel summability of WKB solutions of certain Schr\"odinger-type differential equations with a large parameter. It was shown that under mild requirements on the potential function of the equation, the WKB solutions are Borel summable with respect to the large parameter in vast, unbounded domains of the independent variable. We demonstrated that the formal WKB expansions are the asymptotic expansions, uniform with respect to the independent variable, of the Borel re-summed solutions and supplied computable bounds on their remainder terms. In addition, it was proved that the WKB solutions can be expressed using factorial series in the parameter, and that these expansions converge in half-planes, uniformly in the independent variable.

The theory presented here is a first step towards the global analysis of the WKB solutions of the differential equation \eqref{Eq0}. The main result of the paper demonstrates that the WKB solutions are Borel summable provided that the independent variable is bounded away from Stokes curves emerging either from transition points of $f_0(z)$ or from singular points of $f_1(z)$ and $f_2(z)$. In particular, it does not give any information regarding connection formulae joining the WKB solutions at either side of the Stokes curves.

In what follows, we discuss briefly a possible extension of the theory to turning point problems. Turning points are the simplest types of transition points. Consider the differential equation \eqref{Eq0}, and suppose that the functions $f_n(z)$ are analytic in a domain $\mathbf{D}$ and $f_0(z)$ vanishes at exactly one point $z_0$, say, of $\mathbf{D}$. For the sake of simplicity, we assume that $z_0$ is a simple zero of $f_0(z)$, i.e., $z_0$ is a simple turning point of \eqref{Eq0}. Following Olver \cite[Ch. 11, \S11]{Olver1997}, we make the transformations
\begin{equation}\label{transform2}
\zeta  = \zeta (z) = \left( \frac{3}{2}\int_{z_0 }^z f_0^{1/2} (t)\d t \right)^{2/3} ,\quad f_0^{1/4} (z)w(u,z) = \zeta ^{1/4} \mathcal{W}(u,\zeta ) .
\end{equation}
Then \eqref{Eq0} becomes
\begin{equation}\label{Eq2}
\frac{\d^2 \mathcal{W}(u,\zeta )}{\d\zeta ^2 } = \left( u^2 \zeta  + u\Phi (\zeta ) + \Psi (\zeta ) \right)\mathcal{W}(u,\zeta ),
\end{equation}
with
\[
\Phi (\zeta ) = \frac{\zeta f_1 (z)}{f_0 (z)} \quad \text{and} \quad \Psi (\zeta ) = \frac{\zeta f_2 (z)}{f_0 (z)} - \frac{\zeta ^{3/4} }{f_0^{3/4} (z)}\frac{\d^2 }{\d z^2 }\frac{\zeta ^{1/4} }{f_0^{1/4} (z)} .
\]
The functions $\Phi (\zeta )$ and $\Psi (\zeta )$ are holomorphic in the corresponding $\zeta$ domain $\mathbf{H}$, say. The transformation \eqref{transform2} maps the simple turning point $z_0$ into the origin in $\mathbf{H}$, and the three Stokes curves issuing from $z_0$ are mapped into the rays $\arg \zeta  = 0, \pm \frac{2\pi}{3}\im$. The works of Boyd \cite{Boyd1987} and Olver \cite[Ch. 11, \S11]{Olver1997}  on turning point problems suggest that, under suitable assumptions on $\Phi (\zeta )$ and $\Psi (\zeta )$, the set of all solutions of the differential equation \eqref{Eq2} is of the form
\begin{equation}\label{uniformsol}
\begin{aligned}
\mathcal{W}(u,\zeta ) = \Ai \!\left( u^{2/3} \zeta  + \frac{1}{2u^{1/3} \zeta ^{1/2}}\int_{0}^\zeta  \frac{\Phi (t)}{t^{1/2}}\d t  \right)& \mathscr{A}(u,\zeta ) \\ + \frac{1}{u^{4/3}}{\Ai}\,'\!\! & \left( u^{2/3} \zeta  + \frac{1}{{2u^{1/3} \zeta ^{1/2} }}\int_{0}^\zeta  \frac{\Phi (t)}{t^{1/2} }\d t \right)\mathscr{B}(u,\zeta ),
\end{aligned}
\end{equation}
where $\Ai$ denotes any solution of Airy's equation. We expect that, for sufficiently large values of $|u|$, the coefficient functions $\mathscr{A}(u,\zeta )$ and $\mathscr{B}(u,\zeta )$ are holomorphic functions of $\zeta$ in an appropriate subdomain of $\mathbf{H}$ including $\zeta=0$ and the Stokes rays $\arg \zeta  = 0, \pm \frac{2\pi}{3}\im$, and as functions of $u$ are described asymptotically by descending power series in $u$. Consequently, the continuation formulae for solutions of the form \eqref{uniformsol} follow directly from those for the Airy functions. Then, with the aid of the transformations
\[
\xi = \tfrac{2}{3}\zeta^{3/2} ,\quad W(u,\xi ) = \zeta ^{1/4} \mathcal{W}(u,\zeta ),
\]
connection formulae can be established for formal solutions of the form \eqref{WKB1}. It is also possible to use the foregoing analysis to extend the Borel summability results of the WKB solutions \eqref{WKBsol} to the vicinity of Stokes rays emanating from $\xi=0$. First, we identify the solutions of the form \eqref{uniformsol} which correspond to $W^\pm(u,\xi)$. Next, by employing Theorem \ref{thm1} and techniques similar to those in \cite{Dunster2001}, it is shown that the asymptotic expansions of the coefficient functions $\mathscr{A}(u,\zeta )$ and $\mathscr{B}(u,\zeta )$ are Borel summable with respect to the large parameter $u$, and that the corresponding Borel transforms are analytic functions of $\zeta$ in an unbounded domain containing the Stokes rays. The process is completed by an appeal to a theorem on the composition of Borel summable series \cite[Ch. 5, Theorem 5.55]{Mitschi2016} and the well-known Borel summability properties of WKB solutions of the Airy equation.

In Section \ref{Section3}, we showed that the Borel transforms $F^\pm(t,\xi)$ can be continued analytically, in the complex variable $t$, to a strip containing the positive real axis. The method, however, does not provide us with any information regarding the nature and location of the singularities of $F^\pm(t,\xi)$ in the $t$-plane. Explicit knowledge of singularities is an important prerequisite to any investigation in resurgent analysis and exponential asymptotics. By looking at the functional equation \eqref{finalinteq}, for example, it is reasonable to expect that the singular points of $F^-(t,\xi)$ are located at
\[
t = - 2\xi  + 2\xi _k,
\] 
where the $\xi _k$'s, $k \in \mathbb{Z}_{\geq 0}$, are the singularities of the functions $\phi(\xi)$ and $\psi(\xi)$. Hence, there seems to be a direct link between the singular points of $F^-(t,\xi)$ in the $t$-plane and those of $\phi(\xi)$ and $\psi(\xi)$ in the $\xi$-plane. This conjecture may be verified for some specific examples where the Borel transforms are explicitly known (e.g., the Airy or the Legendre equation), but the general case requires further investigation.

Finally, it would be of great interest to extend the results of the paper to differential equations of the type \eqref{Eq0} in which the potential function $f(u,z)$ admits a (Borel summable) uniform asymptotic expansion of the form
\[
f(u,z) \sim \sum\limits_{n = 0}^\infty \frac{f_n (z)}{u^n}
\]
as $|u|\to +\infty$. Asymptotic properties of solutions to such equations for large $|u|$ were studied by Olver \cite[Ch. 10, \S9]{Olver1997}.

\section*{Acknowledgement} The author was supported by a Premium Postdoctoral Fellowship of the Hungarian Academy of Sciences. The author wish to thank Takashi Aoki and Adri B. Olde Daalhuis for useful discussions and Kohei Iwaki for drawing his attention to the paper \cite{Kamimoto2011}. The author also thanks the referee for helpful comments and suggestions for improving the presentation.

\appendix \section{}\label{Appendix}

The evaluation of the coefficients $\mathsf{A}_n^\pm (\xi)$ via the recursions \eqref{wkbcoeffrec} may be quite cumbersome in practice as they involve nested integrations. (See the first example in Section \ref{Section6}, above.) In this appendix, we present a different method for the computation of these coefficients which avoids nested integrations. We begin with formal solutions of the alternative form
\begin{equation}\label{altwkb}
W^\pm  (u,\xi ) = \exp \!\left( \pm u \xi  \pm \frac{1}{2}\int_{}^\xi \phi (t)\d t  + \sum\limits_{n = 1}^\infty \frac{\mathsf{E}_n^\pm(\xi )}{u^n} \right) .
\end{equation}
The coefficients $\mathsf{E}_n^\pm(\xi )$ are found by substitution of \eqref{altwkb} into \eqref{Eq} and equating like powers of $u$. In this way, we find that
\begin{equation}\label{Ecoeff}
\mathsf{E}_n^ \pm  (\xi ) = \int_{}^\xi  \mathsf{F}_n^ \pm  (t)\d t,
\end{equation}
where
\[
\mathsf{F}_1^ \pm  (\xi ) =  - \frac{1}{4}\phi '(\xi ) \mp \frac{1}{8}\phi ^2 (\xi ) \pm \frac{1}{2}\psi (\xi ),
\]
and
\[
\mathsf{F}_{n + 1}^ \pm  (\xi ) =  - \frac{1}{2}\phi (\xi )\mathsf{F}_n^ \pm  (\xi ) \mp \frac{1}{2}\frac{\d \mathsf{F}_n^ \pm  (\xi )}{\d\xi } \mp \frac{1}{2}\sum\limits_{k = 1}^{n - 1} \mathsf{F}_k^ \pm  (\xi )\mathsf{F}_{n - k}^ \pm  (\xi )
\]
for $n\geq 1$, with the understanding that empty sums are zero. The constants of integration in \eqref{altwkb} and \eqref{Ecoeff} are arbitrary. Comparing \eqref{altwkb} with \eqref{WKB1}, it is seen that the following pair of formal relations hold between the coefficients $\mathsf{A}_{n}^\pm(\xi )$ and $\mathsf{E}_{n}^\pm(\xi )$:
\[
\exp \!\left( \sum\limits_{n = 1}^\infty \frac{\mathsf{E}_n^ \pm  (\xi )}{u^n } \right) = 1 + \sum\limits_{n = 1}^\infty  \frac{\mathsf{A}_n^ \pm  (\xi )}{u^n} .
\]
Application of Ex. 8.3 of Olver \cite[p. 22]{Olver1997} then leads to the recurrence relations
\begin{equation}\label{Arecalt}
\mathsf{A}_{n}^\pm(\xi ) = \mathsf{E}_{n}^\pm(\xi ) + \frac{1}{n}\sum\limits_{k = 1}^{n - 1} k\mathsf{E}_{k}^\pm(\xi )\mathsf{A}_{n - k}^\pm(\xi )
\end{equation}
for $n\geq 1$. Once the constants of integration in \eqref{Ecoeff} are fixed, the $\mathsf{A}_{n}^\pm(\xi )$'s are uniquely determined by \eqref{Arecalt}. For example, to obtain the coefficients generated by the recursive formulae \eqref{Arec}, the lower limits of integration in \eqref{Ecoeff} are taken as $\mp \infty  + \im\Im \xi$.

We mention that computable error bounds for formal solutions of the form \eqref{altwkb} (in the special case that $\phi(\xi) \equiv 0$) have recently been given by Dunster \cite{Dunster2020}.

\smallskip 

\end{document}